
\documentclass[]{birkjour}
\usepackage{amsmath,amsthm,amsopn,amssymb,mathrsfs,times,newtxtext,newtxmath,upref, systeme, amsmath,xcolor}
\usepackage[mathscr]{eucal}

\usepackage{enumitem}
\setlist{label={$$\roman{enumi}\kern1pt$)$}}


\newtheorem{thm}{Theorem}[section]
\newtheorem{prop}[thm]{Proposition}
\newtheorem{cor}[thm]{Corollary}
\newtheorem*{cor*}{Corollary}
\newtheorem{lema}[thm]{Lemma}
\newtheorem*{lema*}{Lemma}

\numberwithin{equation}{section}
\theoremstyle{definition}
\newtheorem*{Def}{Definition}

\newtheorem{Example}{Example}
\newtheorem{obs}{Remark}


\newcommand{\PI}[2]{\left\langle \,#1 , #2\, \right\rangle}

%

\newcommand{\St}{\mathcal{S}}

\newcommand{\HH}{\mathcal{H}}
\newcommand{\KK}{\mathcal{K}}
\newcommand{\M}{\mathcal{M}}

\newcommand{\N}{\mathcal{N}}

\newcommand{\G}{\Gamma}
\newcommand{\mc}[1]{\mathcal{#1}}

\newcommand{\ol}{\overline}
\newcommand{\clran}{\ol{\mathrm{ran}}\,}

\newcommand{\cldom}{\ol{\mathrm{dom}}\,}
\newcommand{\clmul}{\ol{\mathrm{mul}}\,}
\newcommand{\reg}{\mathrm{reg}}
\newcommand{\ra}{\rightarrow}
\DeclareMathOperator{\Sp}{Mp}
\DeclareMathOperator{\SP}{MP}
\DeclareMathOperator{\Mp}{Mp}
\DeclareMathOperator{\MP}{MP}

\newcommand{\PMN}{P_{\M,\N}}

\DeclareMathOperator{\re}{reg}
\DeclareMathOperator{\sing}{sing}

%


\DeclareMathOperator{\ran}{ran}
\DeclareMathOperator{\dom}{dom}

\DeclareMathOperator{\mul}{mul}

\newcommand{\pl}{{\mathbin{/\mkern-3mu/}}}

\newcommand{\lr}{\mathrm{lr}}


\begin{document}
\title{Semiclosed multivalued projections}

\author[Arias]{M.~Laura~Arias}

\address{
	Instituto Argentino de Matem\'atica ``Alberto P. Calder\'on'' \\
	CONICET\\
	Saavedra 15, Piso 3\\
	(1083) Buenos Aires, Argentina \\[5pt]
	Facultad de Ingenier\'{\i}a, Universidad de Buenos Aires\\
	Paseo Col\'on 850 \\
	(1063) Buenos Aires, Argentina}
\email{lauraarias@conicet.gov.ar}

\author[Contino]{Maximiliano Contino}

\address{Depto. de An\'alisis Matem\'atico, Facultad de Matem\'aticas, Universidad Complutense de Madrid \\ Plaza de Ciencias 3 \\ (28040) Madrid, Spain \\ [5pt]
	Instituto Argentino de Matem\'atica ``Alberto P. Calder\'on'' \\
	CONICET\\
	Saavedra 15, Piso 3\\
	(1083) Buenos Aires, Argentina \\[5pt]
	Facultad de Ingenier\'{\i}a, Universidad de Buenos Aires\\
	Paseo Col\'on 850 \\
	(1063) Buenos Aires, Argentina}
\email{mcontino@fi.uba.ar}

\author[Maestripieri]{Alejandra~Maestripieri}

\address{
	Instituto Argentino de Matem\'atica ``Alberto P. Calder\'on'' \\
	CONICET\\
	Saavedra 15, Piso 3\\
	(1083) Buenos Aires, Argentina \\[5pt]
	Facultad de Ingenier\'{\i}a, Universidad de Buenos Aires\\
	Paseo Col\'on 850 \\
	(1063) Buenos Aires, Argentina}
\email{amaestri@fi.uba.ar}

\author[Marcantognini]{Stefania Marcantognini}

\address{
	Instituto Argentino de Matem\'atica ``Alberto P. Calder\'on'' \\
	CONICET\\
	Saavedra 15, Piso 3\\
	(1083) Buenos Aires, Argentina \\[5pt]
	Universidad Nacional de General Sarmiento -- Instituto de Ciencias
	\\ Juan Mar\'ia Gutierrez 1150 
	\\ (1613) Los Polvorines, Pcia. de
	Buenos Aires, Argentina}
\email{smarcantognini@ungs.edu.ar}

\keywords{linear relations; multivalued projections; operator ranges; semiclosed idempotents}
\subjclass{47A06.}

%

\begin{abstract} 
A multivalued projection is an idempotent linear relation with invariant domain. We characterize multivalued projections that are operator ranges (called semiclosed) and provide several formulae of them. 
Moreover, we study the decomposability and continuity of multivalued projections, and describe nilpotent relations. 

\end{abstract}

\maketitle
	
	\section{Introduction}
\label{sec:intro}

Linear relations are a natural generalization of linear operators. As with operators, the notions of idempotents and nilpotents, but now multivalued, play a central role in their study. A linear relation acting between Hilbert spaces is any subspace of their product, much as an operator can be characterized by its graph. 

A \emph{multivalued projection}, or  \emph{semi-projection}, $E$ acting on a Hilbert space $\HH$ is an idempotent linear relation of $\HH \times \HH$ (i.e., $E^2=E$) with invariant domain. A multivalued projection is completely determined by its range and kernel.  We refer to \cite{Cross1}, \cite{Cross} and \cite{Labrousse} for a detailed account on the subject.

Linear relations that are operator ranges (commonly referred to as semiclosed linear relations) can be described as \emph{quotients} of bounded operators  in the sense of linear relations \cite{Hassi}, and this provides extra structure in describing the properties of such relations. 

An important example of a  multivalued projection  that is an operator range is one having range that of a given operator and kernel its \emph{de Brange-Rovnyak complement} \cite{Bolotnikov}. The multivalued part is then their intersection, called the ``overlapping space''; see Example \ref{Example3}. Another example of a semiclosed multivalued projection  appears when solving operator least squares problems with a selfadjoint or positive (semi-definite) weight in a Hilbert space \cite{Matrix}. This is explained in Example \ref{Example4}.

It is easy to check that a multivalued projection is semiclosed if and only if its range and kernel are both operator ranges, where the special case for operators can be found in  \cite{Ota} and  \cite{Semiclosed}.

Any semiclosed multivalued projection $E$ can be written as a direct componentwise sum of an operator (identified with its graph)  with domain equal to the domain of the multivalued projection and $E_{\mul}:=\{0\} \times \mul E,$ where $\mul E$ is the multivalued part of $E.$ Moreover, any such multivalued projection is quasi-affine to a multivalued projection with domain $\cldom E$ and operator  part a positive contraction.
This generalizes a result by Ando for densely defined closed projections, which states that any densely defined closed projection $E$ acting on a Hilbert space is quasi-affine to an orthogonal projection $P$ \cite[Theorem 2.3]{Ando}, meaning that there is positive injective bounded operator, intertwining $E$ and $P$. Moreover, when $\mul E$ is closed in $\dom E,$ then $E$ is quasi-affine to a closed multivalued projection with operator part an orthogonal projection. 

The paper is organized as follows: notation and background material are given in Section \ref{sec:preliminaries}. In Section \ref{sec:three} we start by gathering some known results about multivalued projections. Then we characterize multivalued projections and nilpotent relations, two examples of linear relations with ranges contained in their domains. We end the section by generalizing two formulae for projections in terms of orthogonal projections onto their range and kernel: the formula given by Greville \cite{Greville} for projectors in finite dimensional spaces and the one given by Pt\'ak \cite{Ptak} for projections in Hilbert spaces. In Section \ref{sec:four} we focus our attention on decomposability and continuity of multivalued projections and study some distinguished orthogonal decompositions for multivalued projections, characterizing those whose operator parts in their Lebesgue decomposition are projections. Section \ref{sec:five} is devoted to multivalued projections that are operator ranges. We finish the section with a formula that generalizes the one given by Ando for densely defined closed projections in Hilbert spaces \cite{Ando}.

\section{Preliminaries}
	\label{sec:preliminaries}
	
In this paper $\HH,$ $\KK$ and  $\mc E$  are complex and separable Hilbert spaces. The space of bounded linear operators from $\HH$ to $\KK$ is denoted by $L(\HH, \KK)$ and $L(\HH)$ when $\HH=\KK.$ 
Given a closed subspace $\M$ of $\HH,$  $P_\M $ is the orthogonal projection onto $\M.$  The set of orthogonal projections is denoted by $\mc P.$
The direct sum of two subspaces $\M$ and $\N$ of $\HH$ is indicated by $\M \dotplus \N,$ and $\M \oplus \N$ if $\M \subseteq \N^{\perp}$. In addition, if $\N \subseteq \M,$ $\M\ominus\N$ means $\M\cap\N^\bot.$
An \emph{operator range} is a linear subspace of $\HH$ that is the range of some bounded operator on $\HH$ \cite{Filmore}. The following properties of operator ranges are proved in \cite[Proposition 2.3.3 and Corollary 2.3.1]{Labrousse1980}.

\begin{prop} \label{ORLabrousse} Let $\M$ and $\N$ be operator ranges of $\HH$ such that $\M+\N$ is closed. Then the following hold:
	\begin{enumerate}
		\item [1.] $\ol{\M\cap \N}=\ol{\M}\cap \ol{\N}.$
		\item[2.] $(\M\cap \N)^{\perp}=\M^{\perp}+\N^{\perp}.$
	\end{enumerate}
\end{prop}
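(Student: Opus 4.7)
The plan is to prove item~1 first and deduce item~2 from it. For item~1, the inclusion $\ol{\M\cap\N}\subseteq\ol{\M}\cap\ol{\N}$ is immediate from monotonicity of closure, so all the content lies in the reverse inclusion. Writing $\M=\ran A$ and $\N=\ran B$ for some $A,B\in L(\HH)$, I would consider the operator $T\colon\HH\oplus\HH\to\HH$ defined by $T(x,y):=Ax-By$, whose range is precisely $\M+\N$. The hypothesis that $\M+\N$ is closed together with the open mapping theorem provides a constant $c>0$ such that every $z\in\ran T$ admits a preimage $(u,v)$ under $T$ with $\|(u,v)\|\le c\|z\|$.

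Given $\xi\in\ol{\M}\cap\ol{\N}$, I would choose sequences with $Ax_n\to\xi$ and $By_n\to\xi$; then $z_n:=Ax_n-By_n\to0$. Applying the norm-controlled lifting, I pick $(u_n,v_n)$ with $Au_n-Bv_n=z_n$ and $(u_n,v_n)\to0$, and set $\tilde x_n:=x_n-u_n$. A short computation gives $A\tilde x_n=B(y_n-v_n)\in\M\cap\N$, and since $Au_n\to 0$ one has $A\tilde x_n\to\xi$, producing the required approximating sequence in $\M\cap\N$.

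For item~2, the inclusion $\M^\perp+\N^\perp\subseteq(\M\cap\N)^\perp$ is obvious. For the converse I would first observe that $\ol{\M}+\ol{\N}$ coincides with $\M+\N$ and is therefore closed: indeed
\[
\M+\N\subseteq\ol{\M}+\ol{\N}\subseteq\ol{\M+\N}=\M+\N
\]
by the closedness assumption. Then, applying the classical identity for closed subspaces $X,Y\subseteq\HH$ (namely, $X+Y$ closed implies $(X\cap Y)^\perp=X^\perp+Y^\perp$) to $X=\ol{\M}$ and $Y=\ol{\N}$, and using item~1 together with $\ol{\M}^\perp=\M^\perp$, I would conclude
\[
(\M\cap\N)^\perp=(\ol{\M\cap\N})^\perp=(\ol{\M}\cap\ol{\N})^\perp=\ol{\M}^\perp+\ol{\N}^\perp=\M^\perp+\N^\perp.
\]

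The main obstacle is the norm-controlled lifting in item~1: one needs the open mapping theorem to convert the soft information $Ax_n-By_n\to 0$ into the existence of a \emph{small} correction $(u_n,v_n)$ that exactly cancels the discrepancy and forces $A\tilde x_n=B\tilde y_n$. Once that lifting is in hand, the chain of equalities in item~2 is essentially bookkeeping, relying only on the standard duality between closed sums and closed intersections of closed subspaces.
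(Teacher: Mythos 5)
Your proof is correct. Note that the paper does not prove this proposition itself: it is quoted verbatim from Labrousse (Proposition 2.3.3 and Corollary 2.3.1 of the 1980 reference), so there is no internal argument to compare against. On its own merits, your argument is sound and self-contained: the operator $T(x,y)=Ax-By$ on $\HH\oplus\HH$ has range exactly $\M+\N$, which is closed by hypothesis, so the open mapping theorem applied to the surjection $T\colon\HH\oplus\HH\to\M+\N$ does yield the norm-controlled lifting $\|(u_n,v_n)\|\le c\|z_n\|\to 0$; the identity $A\tilde x_n=B(y_n-v_n)\in\M\cap\N$ and the convergence $A\tilde x_n\to\xi$ then give the nontrivial inclusion $\ol{\M}\cap\ol{\N}\subseteq\ol{\M\cap\N}$. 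For item 2, the observation that $\ol{\M}+\ol{\N}=\M+\N$ is closed, combined with item 1 and the classical duality $(X\cap Y)^{\perp}=X^{\perp}+Y^{\perp}$ for closed subspaces with closed sum (the fact the paper later invokes via Deutsch's lemma), closes the argument correctly.
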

A linear relation from  $\HH$ into $\KK$ is a  subspace  of  $\HH \times \KK.$ The set of linear relations from $\HH$ into $\KK$ is denoted by $\lr(\HH,\KK),$ and $\lr(\HH)$ when $\HH=\KK.$ Given $T\in \lr(\HH,\KK),$ $\dom T,$  $\ran T$ and $\ker T$ denote the domain,  range and kernel of $T,$ respectively. The multivalued part of $T$ is defined by $\mul T:=\{y\in \KK: (0,y)\in T\}.$ If $\mul T=\{0\},$ $T$ is (the graph of) an operator. 
The inverse of $T$ is the relation $T^{-1}:=\{(y,x) : (x,y) \in T\}$. Thus,  $\dom T^{-1}=\ran T$ and $\mul T^{-1} =\ker T.$ 
	
For $T, S \in \lr(\HH,\KK),$  $T \ \hat{+}  \ S$ stands for the sum of $T$ and $S$ as subspaces.
The notations  $T\hat{\dotplus}S$ and $T \hat{\oplus} S$ are self-explanatory. It is useful to note that $(T \ \hat{+} \ S)^{-1}=T^{-1} \ \hat{+} \ S^{-1}.$

The sum of $T+S$ is the linear relation defined by
$$T+S:=\{(x,y+z): (x,y ) \in T \mbox{ and } (x,z) \in S\}.$$
If $R \in \lr(\KK,\mc E)$  the product $RT$ is the linear relation from $\HH$ to $\mc E$ defined by
$$RT:=\{(x,y): (x,z) \in T \mbox{ and } (z,y) \in R \mbox{ for some } z \in \mc{K}\}.$$ It  holds that	$(RT)^{-1}=T^{-1}R^{-1}$.

The following results will be used throughout the paper without mention.
 
\begin{lema}[{\cite[2.02]{Arens}, \cite[Proposition 1.21]{Labrousse}}] \label{lemalr} Let $S, T \in \lr(\HH,\KK).$ Then $S=T$ if and only if $S\subseteq T,$ $\dom T\subseteq\dom S$ and $\mul T \subseteq \mul S.$
\end{lema}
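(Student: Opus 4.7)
The plan is to prove the non-trivial ($\Leftarrow$) direction by a short element-chase; the forward direction is immediate since equality of subspaces forces equality of domains and multivalued parts.

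For the reverse direction, assume $S\subseteq T$, $\dom T\subseteq \dom S$ and $\mul T\subseteq \mul S$. Since $S\subseteq T$ already, it suffices to show $T\subseteq S$. I would pick an arbitrary $(x,y)\in T$ and try to realize it as a sum of a pair in $S$ with a pair of the form $(0,v)$ with $v\in\mul S$. Concretely: because $x\in\dom T\subseteq \dom S$, there exists $z\in\KK$ with $(x,z)\in S$; then by the hypothesis $S\subseteq T$ we also have $(x,z)\in T$. Subtracting pairs in the linear relation $T$ gives $(0,y-z)=(x,y)-(x,z)\in T$, so $y-z\in \mul T$. The inclusion $\mul T\subseteq \mul S$ then yields $(0,y-z)\in S$, and adding $(x,z)\in S$ produces $(x,y)=(x,z)+(0,y-z)\in S$. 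This gives $T\subseteq S$ and therefore $S=T$.

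There is no real obstacle here; the only substantive ingredient is recognizing that in a linear relation the decomposition $(x,y)=(x,z)+(0,y-z)$ reduces equality of relations to controlling a single point in the fiber over each $x\in\dom T$, which is exactly what the hypotheses on $\dom T$ and $\mul T$ provide. The argument is essentially the standard proof of Arens \cite{Arens} and Labrousse \cite{Labrousse}, so I would present it in just a few lines without further elaboration.
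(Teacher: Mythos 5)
Your argument is correct and is exactly the standard proof of this fact from the cited sources (Arens, Labrousse); the paper itself states the lemma without proof, relying on those references. The decomposition $(x,y)=(x,z)+(0,y-z)$ together with the hypotheses on $\dom$ and $\mul$ is precisely the intended reasoning, so there is nothing to add.
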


\begin{lema} \label{sumasubs} Let $T, S \in \lr(\HH,\KK),$ $R \in \lr(\KK,\mc E)$ and $F\in \lr(\mc E, \HH)$. Then
	\begin{enumerate}
		\item[1.] $RT \ \hat{+} \ RS \subseteq R(T \ \hat{+} \ S) $ and the equality holds if $\ran T\subseteq \dom R$ or $\ran S\subseteq \dom R$.
		\item[2.] $TF \ \hat{+} \ SF \subseteq (T \ \hat{+} \ S)F $ and the equality holds if  $\dom T \subseteq \ran F$ or $\dom S\subseteq \ran F$.
	\end{enumerate}
	\end{lema}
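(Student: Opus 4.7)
The plan is to prove part (1) by direct element-chasing, then obtain part (2) as a formal consequence using the identities $(RT)^{-1}=T^{-1}R^{-1}$ and $(T\ \hat{+}\ S)^{-1}=T^{-1}\ \hat{+}\ S^{-1}$ recalled in the preliminaries.

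For the inclusion in part (1), I take $(x,y)\in RT\ \hat{+}\ RS$, so $x=a+b$ and $y=c+d$ with $(a,c)\in RT$ and $(b,d)\in RS$. Unfolding the definition of the product, there exist $z_1,z_2\in\KK$ with $(a,z_1)\in T$, $(z_1,c)\in R$, $(b,z_2)\in S$ and $(z_2,d)\in R$. Summing inside each subspace gives $(a+b,z_1+z_2)\in T\ \hat{+}\ S$ and $(z_1+z_2,c+d)\in R$ (here one uses that $R$ is a linear subspace of $\KK\times\mc E$), so $(x,y)=(a+b,c+d)\in R(T\ \hat{+}\ S)$.

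For equality, assume without loss of generality that $\ran T\subseteq\dom R$. Take $(x,y)\in R(T\ \hat{+}\ S)$ and pick $w$ with $(x,w)\in T\ \hat{+}\ S$ and $(w,y)\in R$. Decompose $x=x_1+x_2$ and $w=w_1+w_2$ with $(x_1,w_1)\in T$ and $(x_2,w_2)\in S$. Since $w_1\in\ran T\subseteq\dom R$, choose $y_1$ with $(w_1,y_1)\in R$, and set $y_2:=y-y_1$. Then $(w_2,y_2)=(w-w_1,\,y-y_1)\in R$ because $R$ is a subspace. This yields $(x_1,y_1)\in RT$, $(x_2,y_2)\in RS$ and $(x,y)=(x_1+x_2,y_1+y_2)\in RT\ \hat{+}\ RS$. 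The case $\ran S\subseteq\dom R$ is entirely symmetric.

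Part (2) is then obtained by passing to inverses: apply part (1) to the relations $T^{-1},S^{-1}\in\lr(\KK,\HH)$ and $F^{-1}\in\lr(\HH,\mc E)$. The identities $(XY)^{-1}=Y^{-1}X^{-1}$ and $(X\ \hat{+}\ Y)^{-1}=X^{-1}\ \hat{+}\ Y^{-1}$ turn the inclusion $F^{-1}T^{-1}\ \hat{+}\ F^{-1}S^{-1}\subseteq F^{-1}(T^{-1}\ \hat{+}\ S^{-1})$ into $TF\ \hat{+}\ SF\subseteq (T\ \hat{+}\ S)F$, and the condition $\ran T^{-1}\subseteq\dom F^{-1}$ (resp.\ $\ran S^{-1}\subseteq\dom F^{-1}$) translates to $\dom T\subseteq\ran F$ (resp.\ $\dom S\subseteq\ran F$). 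There is no real obstacle here beyond bookkeeping; the only subtlety is the selection step in the equality argument, where one must be careful to use the hypothesis to produce the intermediate $y_1$ and then exploit linearity of $R$ to recover $y_2$ automatically, rather than trying to split $y$ in both coordinates independently.
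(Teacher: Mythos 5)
Your proof is correct and follows essentially the same route as the paper: the forward inclusion by direct element-chasing, the reverse inclusion under $\ran T\subseteq\dom R$ by selecting one intermediate image $y_1$ for $w_1$ and recovering $(w_2,y-y_1)\in R$ by linearity, and part (2) by passing to inverses. No gaps.
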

\begin{proof} $1.$ The proof of the inclusion is trivial. Assume that $\ran T\subseteq \dom R$ and let $(x,y)\in R(T \ \hat{+} \ S).$ Thus, there exist $z=z_1+z_2\in\KK$ and $x_1,x_2\in \HH$ such that $x=x_1+x_2,$ $(x_1,z_1)\in T$, $(x_2,z_2)\in S$ and $(z,y)\in R.$ Since $\ran T\subseteq \dom R$,  there exists $w\in \mc E$ such that $(z_1,w)\in R$ then $(x_1,w)\in RT.$  Also, $(z_2,y-w)\in R$ and so  $(x_2,y-w)\in RS$. Therefore, $(x,y)=(x_1,w)+(x_2,y-w)\in RT \ \hat{+} \ RS$ and the equality holds.
	 
$2.$ Take inverses and use $1.$
\end{proof}

Given a subspace $\M$ of $\HH$, $I_\M:=\{(u,u): u\in\M\}$ and $0_\M:=\M \times \{0\}.$ When $\M=\HH$ we write $I$ and $0$ instead. The following identities can be easily proved
\begin{align} \label{TT1}
TT^{-1}T&=T \mbox{ and } T^{-1}T=I_{\dom T} \ \hat{+} (\{0\} \times  \ker T)=I_{\dom T} \ \hat{+} (\ker T \times  \{0\}),
\end{align}
where we use that $\ker T \subseteq \dom T$ for the last equality.

Set
$$
T(\mc{M}):=\{y : (x,y) \in T \mbox{ for some } x \in \mc{M} \},
$$  
and for any $x \in \dom T,$ $Tx:=T(\{x\}).$

The \emph{closure} $\ol{T}$ of $T$ is the closure of $T$ in $\HH \times \KK$ endowed with the product topology. Thus, the relation $T$ is  \emph{closed} when $T=\ol{T}.$

The \emph{adjoint} of $T\in \lr(\HH,\KK)$ is the linear relation from $\KK$ to $\HH$ defined by
$$T^*:=\{(x,y) \in \KK \times \HH: \PI{g}{x}=\PI{f}{y} \mbox{ for all } (f,g) \in T \}.$$ 
	The adjoint of $T$
	is  a closed linear relation,  $\ol{T}^*=T^*$ and $T^{**}:=(T^*)^*=\ol{T}.$ It holds that $\mul T^* =(\dom T)^{\perp}$ and $\ker T^* =(\ran T)^{\perp}.$ Therefore, if $T$ is closed both $\ker T$ and $\mul T$ are closed subspaces.

\subsection{Decompositions of linear relations}
	Next we outline some basics about decompositions of linear relations from the work on the subject by Hassi et al., see \cite{Hassi,Hassi2,hassi2018lebesgue}.
	
	Given $T \in \lr(\HH, \KK),$ a subspace $\tilde{T}$ of $T$ is called an \emph{ operator part} of $T$ if $\tilde{T}$ is an operator with $\dom \tilde{T}=\dom T$ and $\tilde{T}\subseteq T.$ In this case, $T=\tilde{T} \ \hat{+} \ T_{\mul},$ where  $T_{\mul}:=\{0\} \times \mul T$. The relation $$T_0:=T \cap (\cldom T \times \cldom T^*)$$  is a (closable) operator from $\cldom T$ to $\cldom T^*$ contained in $T$ but,  in general, $\dom T_0\subsetneq \dom T$. We say that $T$ is \emph{decomposable} if $\dom T_0 = \dom T$, i.e., if $T_0$ is an operator part of $T$ \cite{Hassi2}. In other words, $T$ is decomposable if  $T$  admits the componentwise sum decomposition
\begin{equation} \label{Tdecom}
	T=T_0 \ \hat{\oplus} \ T_{\mul}.
\end{equation} 

On the other hand, a linear relation $T$ is said to have a \emph{distinguished orthogonal range decomposition} if $T = T_1 + T_2$ with $T_1$ an operator, $\dom T_1 = \dom T_2 = \dom T$ and $\ran T_1 \bot \ran T_2$. By \cite[Corollary 3.6]{hassi2018lebesgue}, $T = T_1 + T_2$ is a distinguished orthogonal range decomposition of $T$ if and only if there exists $Q \in \mc P$ such that $\mul T\subseteq \ker Q,$ and in this case, $T_1=QT$ and $T_2=(I-Q)T.$  

Among the distinguished orthogonal range decompositions, there are  two with some extremal properties \cite{hassi2018lebesgue}. Consider $P:=P_{\cldom T^*}$  and define the \emph{regular part} $T_{\re}$ and the \emph{singular part} $T_{\sing}$ of $T$ by
$$T_{\re}:=PT \ \ \mbox{ and } \ \ T_{\sing}:=(I-P)T.$$

The terminology refers to the notions of regular and singular linear relations. $T \in L(\HH, \KK)$ is said to be \emph{regular} (or closable) if $\ol{T}$ is an operator; \emph{singular} if $\ol{T}$ is the (Cartesian) product of closed subspaces in $\HH$ and $\KK.$
Clearly, $T$ has the distinguished orthogonal range decomposition
\begin{equation}\label{Treg+Tsing}
T = T_{\reg} + T_{\sing}.
\end{equation}
This is the \emph{Lebesgue decomposition} of $T.$
The regular and singular parts verify that $\ol{T_{\reg}}$ is an operator and $\ol{T_{\sing}}=\dom \ol{T} \times \mul \ol T$ \cite[Theorem 4.1]{hassi2018lebesgue}. If $Q:=P_{\mul T^{\perp}}$ and $T_m:= QT,$ then
\begin{equation}\label{Tm}
T = T_m+ (I-Q)T
\end{equation}
is also a  distinguished orthogonal range decomposition of $T$, known as the \emph{weak Lebesgue decomposition} of $T.$

Despite the different nature of the decompositions (\ref{Tdecom}) and (\ref{Treg+Tsing}) (or (\ref{Tm})),  the operator terms may be the same as the following theorem shows. 

\begin{thm}[{\cite[Theorems 3.10 and 3.18]{Hassi2}}] \label{decomthm} Let $T \in \lr(\HH, \KK).$ The following are equivalent:
	\begin{enumerate}
		\item $T$ is decomposable;
		\item $\ran T_{\sing} \subseteq \mul T;$
		\item $T_0=T_{\re};$
		\item $T_0= T_m$.
	\end{enumerate}
If any of these conditions hold then $\mul \ol{T}=\clmul T.$
\end{thm}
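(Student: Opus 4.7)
The plan is to run $(1) \Rightarrow (2) \Rightarrow (3) \Rightarrow (1)$ and, separately, $(1) \Leftrightarrow (4)$, then deduce the auxiliary identity $\mul \ol{T} = \clmul T$ from decomposability. Underpinning every step is the identification
$$\cldom T^* = (\mul \ol{T})^\perp,$$
which gives $I - P = P_{\mul \ol{T}}$ and, together with the trivial inclusion $\mul T \subseteq \mul \ol{T}$, the ordering $P \leq Q$; in particular $\ran T_0 \subseteq \cldom T^*$ lies in $\mul T^\perp$ and is orthogonal to $\mul T$.

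For $(1) \Rightarrow (2)$: if $T$ is decomposable, then any $(x,y) \in T$ splits as $(x, T_0 x) + (0, m)$ with $m \in \mul T$. Applying $I - P = P_{\mul \ol{T}}$ annihilates $T_0 x$ (because $T_0 x \in \cldom T^*$) and fixes $m$ (because $m \in \mul T \subseteq \mul \ol{T}$), so $(I-P)y = m \in \mul T$, giving $\ran T_{\sing} = (I-P)\ran T \subseteq \mul T$. For $(2) \Rightarrow (3)$: if $(I-P)y \in \mul T$ for every $(x,y) \in T$, then $(0,(I-P)y) \in T_{\mul} \subseteq T$, and subtracting from $(x,y)$ places $(x, Py) \in T \cap (\cldom T \times \cldom T^*) = T_0$; the reverse inclusion $T_0 \subseteq T_{\re}$ is immediate because $Py = y$ whenever $y \in \cldom T^*$. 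Then $(3) \Rightarrow (1)$ is trivial: $\dom T_0 = \dom T_{\re} = \dom T$. The equivalence $(1) \Leftrightarrow (4)$ runs along the same lines with $Q$ in place of $P$: under decomposability, $Qy = QT_0 x + Qm = T_0 x$ (using $T_0 x \in \mul T^\perp = \ran Q$ and $m \in \mul T = \ker Q$), so $T_m = T_0$; the converse again reduces to $\dom T_m = \dom T$.

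The step that requires the most care is the final identity $\mul \ol{T} = \clmul T$. The inclusion $\clmul T \subseteq \mul \ol{T}$ is automatic since $\mul \ol{T}$ is closed. For the converse, take $m \in \mul \ol{T}$ approximated by $(x_n, y_n) \in T$ with $x_n \to 0$ and $y_n \to m$. Decomposability yields $y_n = T_0 x_n + m_n$ with $m_n \in \mul T$, and the two summands are mutually orthogonal, since $T_0 x_n \in (\mul \ol{T})^\perp$ while $m_n \in \mul \ol{T}$. Projecting by $P_{\mul \ol{T}}$ then isolates $m_n = P_{\mul \ol{T}} y_n$, which converges to $P_{\mul \ol{T}} m = m$; hence $m \in \clmul T$. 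This orthogonal-splitting trick is where the argument most easily goes astray, since one cannot assume separate convergence of $T_0 x_n$ or $m_n$ without leveraging that they live in mutually orthogonal subspaces.
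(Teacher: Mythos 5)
The paper does not prove this theorem at all: it is imported verbatim from Hassi--de Snoo--Szafraniec \cite[Theorems 3.10 and 3.18]{Hassi2}, so there is no in-paper argument to compare against. Your blind proof is correct and self-contained. The one identity everything hinges on, $\cldom T^* = (\mul \ol{T})^\perp$, follows from $\mul T^{**} = (\dom T^*)^\perp$ and $T^{**}=\ol{T}$, exactly as the paper's preliminaries allow; from it you correctly get $I-P = P_{\mul\ol{T}}$ and $P\leq Q$, and each implication in the cycle $(1)\Rightarrow(2)\Rightarrow(3)\Rightarrow(1)$ and in $(1)\Leftrightarrow(4)$ checks out (in particular you rightly verify both inclusions for $T_0=T_{\re}$, the inclusion $T_0\subseteq T_{\re}$ being unconditional). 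Your treatment of the closing identity $\mul\ol{T}=\clmul T$ is also sound: the orthogonal splitting $y_n = T_0x_n + m_n$ with $T_0x_n\in(\mul\ol{T})^\perp$ and $m_n\in\mul\ol{T}$ lets you recover $m_n = P_{\mul\ol{T}}y_n \to m$ without needing separate convergence of the two summands, which is precisely the point you flag as delicate. No gaps.
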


\begin{Def} $T \in \lr(\HH,\mc K)$ is \emph{continuous} if for any neighbourhood $V,$ $T^{-1}(V)$ is a neighbourhood in $\dom T.$ 
\end{Def}

\begin{prop}[{\cite[Prop. 3.1]{Cross}}] A relation $T \in \lr(\HH,\KK)$ is \emph{continuous} if and only if $T_m$ is bounded. 
\end{prop}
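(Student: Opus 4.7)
The plan is to verify that $T_m$ is single-valued, identify $\|T_m x\|$ with the distance from any image point to $\mul T$, and then translate continuity into boundedness via preimages of open balls.

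First I would check that $T_m$ is in fact an operator. If $(x,y_1),(x,y_2)\in T$, then $(0,y_1-y_2)\in T$, so $y_1-y_2\in\mul T\subseteq\ol{\mul T}=(\mul T^\perp)^\perp$; hence $Q(y_1-y_2)=0$, where $Q:=P_{\mul T^\perp}$. Consequently, for every $x\in\dom T$, the vector $T_m x=Qy$ is independent of the choice of $y$ with $(x,y)\in T$, and since $I-Q$ is the orthogonal projection onto $\ol{\mul T}$,
$$\|T_m x\|=\|Qy\|=\dist(y,\ol{\mul T})=\dist(y,\mul T).$$

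For the forward direction, assume $T$ is continuous and apply the definition to the open unit ball $V:=\{z\in\KK:\|z\|<1\}$. Then there exists $\varepsilon>0$ such that every $x\in\dom T$ with $\|x\|<\varepsilon$ admits some $y$ with $(x,y)\in T$ and $\|y\|<1$. Since $\|T_m x\|\le\|y\|<1$ for that particular $y$, homogeneity yields $\|T_m x\|\le \varepsilon^{-1}\|x\|$ on $\dom T_m=\dom T$, so $T_m$ is bounded.

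For the converse, suppose $\|T_m\|\le M$ and let $V$ be a neighbourhood of $0$ in $\KK$; choose $\delta>0$ with $\{z:\|z\|<\delta\}\subseteq V$. For any $x\in\dom T$ with $\|x\|<\delta/M$ and any $(x,y)\in T$, the identity above gives $\dist(y,\mul T)=\|T_m x\|\le M\|x\|<\delta$, so one can pick $z\in\mul T$ with $\|y-z\|<\delta$. Setting $y':=y-z$, one has $(x,y')=(x,y)+(0,-z)\in T$ and $y'\in V$, i.e.\ $x\in T^{-1}(V)$. Hence $T^{-1}(V)$ contains the open ball of radius $\delta/M$ in $\dom T$ and is a neighbourhood of $0$, proving continuity.

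The only delicate point is the passage in the reverse direction from $\ol{\mul T}$, which is what $Q$ naturally ``sees'', to the possibly non-closed $\mul T$, which is what the relation-theoretic adjustment of $y$ requires. This is resolved by the elementary fact $\dist(y,\mul T)=\dist(y,\ol{\mul T})$, which turns the strict inequality $\|T_m x\|<\delta$ into the genuine existence of $z\in\mul T$ with $\|y-z\|<\delta$.
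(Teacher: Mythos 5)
Your proof is correct and complete: the well-definedness of $T_m$, the identity $\Vert T_m x\Vert=\dist(y,\mul T)$ for any $y\in Tx$, and the two implications via homogeneity and the adjustment $y\mapsto y-z$ with $z\in\mul T$ are exactly the standard argument. The paper itself gives no proof here (it cites Cross--Wilcox, Prop.~3.1), and your reasoning matches that reference's approach, so there is nothing substantive to compare; the only cosmetic point is the trivial case $T_m=0$, where the radius $\delta/M$ should be replaced by ``all of $\dom T$''.
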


By Theorem \ref{decomthm}, a decomposable linear relation $T$ is continuous if and only if $T_0$
is bounded.

\begin{prop} [{\cite[Theorem 3.2]{Cross}}] \label{Tcont}  Let $T\in\lr(\HH, \KK)$ be closed. Then $T$ is continuous if and only if $\dom T$ is closed.
\end{prop}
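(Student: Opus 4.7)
The plan is to pass to the operator $T_m$ of the weak Lebesgue decomposition and apply the closed graph theorem. Recall that $T_m := QT$, where $Q := P_{\mul T^{\perp}}$, is always an operator with $\dom T_m = \dom T$, and by the previous proposition $T$ is continuous if and only if $T_m$ is bounded. Thus it suffices to show that, when $T$ is closed, $T_m$ is bounded if and only if $\dom T$ is closed.

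The crucial preliminary is that $T_m$ inherits closedness from $T$. Since $T$ is closed, $\mul T$ is a closed subspace of $\KK$, so $Q$ is a genuine orthogonal projection and $\{0\} \times \mul T \subseteq T$. Given $(x,y)\in T$, the subspace $T$ also contains $(0,(I-Q)y)$, hence contains $(x,Qy)=(x,y)-(0,(I-Q)y)$; in other words $T_m \subseteq T$. Now suppose $(x_n,Qy_n) \to (x,z)$ with $(x_n,y_n)\in T$. The approximating pairs already lie in $T$, and $z$ lies in the closed subspace $\mul T^{\perp}$. Closedness of $T$ yields $(x,z)\in T$, and the identity $z=Qz$ gives $(x,z)=(x,Qz)\in T_m$. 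So $T_m$ is closed.

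For the forward implication, assume $T$ is continuous, so that $T_m$ is bounded. Take $x_n \in \dom T = \dom T_m$ with $x_n \to x$. Boundedness of $T_m$ makes $(T_m x_n)$ Cauchy, hence convergent to some $y\in\KK$; closedness of $T_m$ places $(x,y)$ in $T_m$, so $x\in\dom T$. Therefore $\dom T$ is closed. For the converse, if $\dom T$ is closed, then $T_m$ is a closed operator between the Hilbert spaces $\dom T$ and $\KK$, and the classical closed graph theorem delivers boundedness of $T_m$. By the previous proposition, $T$ is continuous.

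The only nontrivial point is the preservation of closedness when passing from $T$ to $T_m$; everything else is standard closed graph bookkeeping. The key trick is the inclusion $T_m \subseteq T$, which reduces the verification of closedness of $T_m$ directly to the hypothesis on $T$ and avoids having to extract convergent subsequences from the original sequence $(y_n)$.
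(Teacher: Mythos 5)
The paper does not prove this statement; it imports it verbatim from Cross--Wilcox \cite[Theorem 3.2]{Cross}, so there is no internal proof to compare against. Your argument is correct and self-contained given the facts already quoted in the paper: the equivalence ``$T$ continuous $\iff T_m$ bounded'' from the preceding proposition, the observation that closedness of $T$ forces $\mul T$ to be closed so that $(0,(I-Q)y)\in T$ and hence $T_m=QT\subseteq T$ (which is what makes $T_m$ inherit closedness), and then the closed graph theorem on the Hilbert space $\dom T$ for one direction and the Cauchy-sequence argument for the other. No gaps.
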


\begin{prop} [{\cite[Proposition 3.5]{Hassi2}}] \label{bounded3} 
 Let $T \in \lr(\HH, \KK).$ Then $T_{\re}$ is a bounded operator if and only if $\dom T^*$ is closed.
\end{prop}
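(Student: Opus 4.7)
My plan is to reduce everything to a computation of $T_{\re}^\ast$. Writing $T_{\re}=PT$ with $P=P_{\overline{\dom T^{\ast}}}$, I first want to compute this adjoint explicitly. Since $P\in L(\KK)$ is everywhere defined, the product-adjoint identity gives $T_{\re}^{\ast}=(PT)^{\ast}=T^{\ast}P^{\ast}=T^{\ast}P$. (If one doubts the equality direction of the product-adjoint formula here, it can be verified by hand: $(f,g)\in(T^{\ast}P)^{\ast}$ unfolds via the definition to the two conditions $g\perp(\overline{\dom T^{\ast}})^{\perp}=\mul\overline{T}$ and $(f,g)\in T^{\ast\ast}=\overline{T}$, which is exactly the relation $P\overline{T}$ on the other side.)

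Next I want to read off the domain. By definition $\dom(T^{\ast}P)=\{x\in\HH:Px\in\dom T^{\ast}\}$. Since $\dom T^{\ast}\subseteq\overline{\dom T^{\ast}}=\ran P$, elements of $P^{-1}(\dom T^{\ast})$ are precisely sums of an element of $\dom T^{\ast}$ and an element of $\ker P$, so
\begin{equation*}
\dom T_{\re}^{\ast}=\dom T^{\ast}\ \hat{\oplus}\ (\overline{\dom T^{\ast}})^{\perp}=\dom T^{\ast}\ \hat{\oplus}\ \mul\overline{T}.
\end{equation*}
Since $\mul\overline{T}$ is always closed, this immediately gives: $\dom T_{\re}^{\ast}$ is closed if and only if $\dom T^{\ast}$ is closed.

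It therefore suffices to show the operator-level duality $T_{\re}\text{ is bounded}\iff\dom T_{\re}^{\ast}\text{ is closed}$. The forward direction is easy: if $T_{\re}$ is bounded, then for every $y\in\KK$ the functional $x\mapsto\PI{y}{T_{\re}x}$ is bounded on $\dom T$, so $y\in\dom T_{\re}^{\ast}$ and $\dom T_{\re}^{\ast}=\KK$. For the converse, I would apply Proposition \ref{Tcont} to the (automatically closed) relation $T_{\re}^{\ast}$: closedness of $\dom T_{\re}^{\ast}$ makes $T_{\re}^{\ast}$ continuous, hence $(T_{\re}^{\ast})_m$ is a bounded operator; dualising via $T_{\re}^{\ast\ast}=\overline{T_{\re}}$ then yields that $\overline{T_{\re}}$ is a bounded operator, and in particular its restriction $T_{\re}$ to $\dom T$ is bounded.

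The main obstacle is that last converse step, i.e.\ propagating closedness of $\dom T_{\re}^{\ast}$ back to boundedness of $T_{\re}$. The cleanest route uses only the toolbox already introduced: apply Proposition \ref{Tcont} to $T_{\re}^{\ast}$ (which is closed as an adjoint) and then use that $\overline{T_{\re}}=T_{\re}^{\ast\ast}$ is an operator (because $T_{\re}$ is regular) together with the standard uniform-boundedness argument to transfer the continuity of $T_{\re}^{\ast}$ to a norm bound on $\overline{T_{\re}}$. Once this duality lemma is in place, combining it with the domain identity of the second paragraph gives the proposition.
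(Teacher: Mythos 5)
The paper offers no proof of this proposition --- it is imported verbatim from \cite[Proposition 3.5]{Hassi2} --- so there is no internal argument to compare yours against; I can only judge the proposal on its own terms, and on those terms it is essentially correct. The skeleton is sound: the identity $T_{\re}^*=(PT)^*=T^*P$ does need the justification you give (the general product--adjoint formula for relations yields only $T^*P\subseteq (PT)^*$; equality uses that $P$ is bounded and everywhere defined, and is most cleanly checked directly from $(x,y)\in (PT)^*\Leftrightarrow (Px,y)\in T^*$); the domain computation $\dom T_{\re}^*=\dom T^*\oplus(\ol{\dom T^*})^{\perp}=\dom T^*\oplus\mul\ol{T}$ is right; and closedness of this set is equivalent to closedness of $\dom T^*$ because the second summand is closed and orthogonal to the first.

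The only place that needs tightening is the converse step you yourself flag as the main obstacle, and your own domain formula already contains the missing ingredient. Since $\dom T_{\re}^*=\dom T^*\oplus(\dom T^*)^{\perp}$ is automatically dense in $\KK$, closedness forces $\dom T_{\re}^*=\KK$. Proposition \ref{Tcont} then makes the closed, everywhere defined relation $T_{\re}^*$ continuous, so $c:=\Vert (T_{\re}^*)_m\Vert<\infty$, and no uniform-boundedness or separate ``dualising lemma'' is required: for $f\in\dom\ol{T_{\re}}$ and any $y\in\KK$, the pair $(y,z)$ with $z:=(T_{\re}^*)_m y$ lies in $T_{\re}^*$ (as $T_{\re}^*$ is closed, $\mul T_{\re}^*$ is closed and one may subtract the multivalued component), whence by the definition of the adjoint
\begin{equation*}
\bigl\vert \PI{y}{\ol{T_{\re}}f}\bigr\vert=\bigl\vert \PI{z}{f}\bigr\vert\le c\,\Vert y\Vert\,\Vert f\Vert ,
\end{equation*}
giving $\Vert \ol{T_{\re}}f\Vert\le c\,\Vert f\Vert$ directly; that $\ol{T_{\re}}=T_{\re}^{**}$ is an operator is already recorded in the paper via \cite[Theorem 4.1]{hassi2018lebesgue}. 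With that sentence inserted, your argument is a complete and self-contained proof of the cited result.
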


\section{Multivalued projections and nilpotents} 	\label{sec:three}

Following \cite{Ota}, we now present two  types of linear relations with domains containing their ranges: the multivalued projections and the multivalued nilpotents. 

\begin{Def} Let $E \in \lr(\HH)$ such that $\ran E \subseteq \dom E.$ We say that $E$ is a \emph{multivalued projection} if $E$ is \emph{idempotent}, that is $E^2=E;$ and  a \emph{multivalued nilpotent} if
\begin{equation} \label{eqnil}
E^2=\dom E \times \mul E.
\end{equation}
\end{Def}

Notice that $E$ is a multivalued nilpotent relation if and only if $$E^2=0_{\dom E} \ \hat{+} \ (\{0\} \times \mul E).$$
If $E$ is a multivalued projection (respectively a multivalued nilpotent) with $\mul E=\{0\},$ then $E$ is a \emph{projection} (respectively a \emph{nilpotent}).

There are idempotent linear relations which are not multivalued projections. For example, if $E$ is a projection then, by \cite[Corollary 4.15]{idem}, $E^{-1}$ is an idempotent relation such that $\ran E^{-1} =\dom E=\ran E \dotplus \ker E$ and  $\dom E^{-1}=\ran E.$
Therefore if $\ker E \not = \{0\},$ $E^{-1}$ is not a multivalued projection.

Also, there are relations satisfying \eqref{eqnil} whose domains do not contain their ranges. For instance, let $\M \not =\{0\}$ be a subspace and set $E=\{0\} \times \M$ so that  $E$ satisfies \eqref{eqnil} but $\ran E=\M \not \subseteq \{0\}=\dom E.$

Multivalued projections were introduced by Cross and Wilcox in \cite{Cross} and later studied by Labrousse in \cite{Labrousse}. Multivalued projections preserve many properties of projections, for instance, they are fully described by their ranges and kernels. In the sequel, $\Sp(\HH)$ denotes the set of multivalued projections on $\HH$ and $\SP(\HH)$ stands for the subset of $\Sp(\HH)$ of closed multivalued projections. 

\begin{prop}[\cite{Cross,Labrousse}] \label{prop2.2} $E \in \Sp(\HH)$ if and only if 
$E=I_{\ran E} \ \hat{+} \ (\ker E \times \{0\}).$
\end{prop}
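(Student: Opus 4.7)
The plan is to unwind both implications directly in terms of pairs, noting first that the componentwise sum $I_{\ran E}\,\hat{+}\,(\ker E\times\{0\})$ consists precisely of pairs of the form $(x_1+x_2,\,x_1)$ with $x_1\in\ran E$ and $x_2\in\ker E$, so its range is $\ran E$, its multivalued part is $\ran E\cap\ker E$, and its domain $\ran E+\ker E$ contains $\ran E$.

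For the $(\Leftarrow)$ direction, invariance of the domain is immediate. To check $E^2=E$, I would take $(x,y)\in E$ and $(y,z)\in E$, decompose both as above, and observe that $y=x_1=y_1+y_2$ forces $z=y_1\in\ran E$ and $x=y_1+(y_2+x_2)$ with $y_2+x_2\in\ker E$, placing $(x,z)$ back in $E$. The reverse inclusion $E\subseteq E^2$ follows because every $y\in\ran E$ yields $(y,y)\in I_{\ran E}\subseteq E$, so any $(x,y)\in E$ composes with $(y,y)$ to give $(x,y)\in E^2$.

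For the $(\Rightarrow)$ direction, the crucial step is to show $I_{\ran E}\subseteq E$. Given $x\in\ran E$, pick $(u,x)\in E$ by definition of the range, and since $x\in\ran E\subseteq\dom E$ pick also $(x,y)\in E$ for some $y$. Composing gives $(u,y)\in E^2=E$, so from $(u,x),(u,y)\in E$ we extract $(0,y-x)\in E$, i.e., $y-x\in\mul E$. Subtracting $(0,y-x)$ from $(x,y)$ then produces $(x,x)\in E$, as wanted. Combined with the trivial inclusion $\ker E\times\{0\}\subseteq E$, this gives $I_{\ran E}\,\hat{+}\,(\ker E\times\{0\})\subseteq E$. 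The reverse inclusion is now bookkeeping: for any $(x,y)\in E$ the previous step yields $(y,y)\in E$, hence $(x-y,0)=(x,y)-(y,y)\in E$, forcing $x-y\in\ker E$ and the desired decomposition $(x,y)=(y,y)+(x-y,0)$.

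The main obstacle is the single step of producing $(x,x)\in E$ for $x\in\ran E$ using only $E^2=E$ and the domain-invariance hypothesis, because this is the point where idempotency genuinely interacts with $\mul E$; everything else amounts to routine manipulation of componentwise sums, with Lemma \ref{lemalr} available if one prefers to check equality by inclusion together with matching domains and multivalued parts.
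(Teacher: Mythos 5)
Your proof is correct. Note that the paper itself offers no proof of Proposition \ref{prop2.2}: it is quoted from Cross--Wilcox and Labrousse, so there is no in-paper argument to compare against line by line. Your argument is a sound, self-contained derivation: the computation of the domain, range and multivalued part of $I_{\ran E}\ \hat{+}\ (\ker E\times\{0\})$ is right, the verification of $E^2=E$ in the backward direction is right, and you correctly identify the crux of the forward direction, namely producing $(x,x)\in E$ for $x\in\ran E$; your chain $(u,x),(x,y)\in E\Rightarrow(u,y)\in E^2=E\Rightarrow(0,y-x)\in E\Rightarrow(x,x)=(x,y)-(0,y-x)\in E$ uses both idempotency and $\ran E\subseteq\dom E$ exactly where they are needed. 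It is worth observing that the paper's Proposition \ref{lemamul} records precisely your key intermediate fact ($E\in\Sp(\HH)$ iff $I_{\ran E}\subseteq E$) and proves the idempotency half by a more algebraic route, $T\subseteq I_{\ran T}T\subseteq T^2$ and $T^2=TI_{\ran T}T\subseteq TT^{-1}T=T$, which avoids element chasing at the cost of invoking the identities \eqref{TT1}; your pointwise version and that relational computation are two presentations of the same mechanism, and once $I_{\ran E}\subseteq E$ is in hand your final bookkeeping step (subtracting $(y,y)$ from $(x,y)$ to land in $\ker E\times\{0\}$) completes the equality, e.g.\ via Lemma \ref{lemalr} as you suggest.
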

From now on $\M, \N$ are subspaces of $\HH.$  In view of the above proposition write  $$P_{\M, \N}:=I_\M \ \hat{+} \ (\N \times \{0\}).$$ 

Thus, $\PMN$ denotes the multivalued projection with range $\M$ and kernel $\N.$ It is easy to check that $\dom \PMN=\M+\N$ and $\mul \PMN=\M\cap \N.$ When $\PMN$ is a projection, we write $P_{\M \pl \N},$ and  $P_\M$ if $\N=\M^{\perp}.$

	\begin{Example} \label{remark1} Given $T \in \lr(\HH)$ we can express \eqref{TT1}  in terms of multivalued projections since 
	$$T^{-1}T=P_{\dom T, \ker T} \mbox{ and }  TT^{-1}=P_{\ran T, \mul T}.$$ Moreover $T^{-1}$ is the unique solution of the system of equations 
	\begin{equation} \label{system}
		XT=P_{\dom T, \ker T}, TX=P_{\ran T, \mul T},  XTX=X.
	\end{equation}
In fact, by \eqref{TT1}, $T^{-1}$ is a solution. Suppose that $X \in \lr(\HH)$ is also a solution. Then $XT=T^{-1}T=(XT)^{-1}$ and $TX=TT^{-1}.$ Hence, $TXT=TT^{-1}T=T.$ Taking inverses, 
	$$T^{-1}=(TXT)^{-1}=(XT)^{-1}T^{-1}=XTT^{-1}=XTX=X.$$
\end{Example}

\medskip

\begin{prop} \label{lemamul} Let $T \in \lr(\HH).$ Then
\begin{enumerate}
	\item [1.] $T$ is a multivalued projection if and only if $I_{\ran T} \subseteq T.$
	\item [2.] $T$ is a multivalued nilpotent if and only if $\ran T \subseteq \ker T.$ 
\end{enumerate}	
\end{prop}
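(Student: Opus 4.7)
The plan is to prove each equivalence by unpacking definitions, with the key tool being that $T$ is a linear subspace of $\HH\times\HH$.

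For part $1$, the forward implication is immediate from Proposition \ref{prop2.2}, which exhibits any multivalued projection in the form $I_{\ran T}\ \hat{+}\ (\ker T\times\{0\})$. For the converse, assume $I_{\ran T}\subseteq T$. The inclusion $\ran T\subseteq\dom T$ is automatic since $(y,y)\in T$ for every $y\in\ran T$. To verify $T^{2}=T$, the containment $T\subseteq T^{2}$ follows by composing $(x,y)\in T$ with $(y,y)\in T$. For $T^{2}\subseteq T$, take $(x,z)\in T^{2}$ with intermediate element $y$; subtracting $(y,y)\in T$ from $(y,z)\in T$ gives $(0,z-y)\in T$, and combining with $(x,y)\in T$ by linearity yields $(x,z)\in T$.

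For part $2$, the backward direction is straightforward: if $\ran T\subseteq\ker T$, then $\ran T\subseteq\dom T$ is automatic, and for $(x,y)\in T^{2}$ with intermediate $z$ one has $z\in\ker T$, so $(z,0)\in T$ and $(0,y)=(z,y)-(z,0)\in T$ forces $y\in\mul T$. The reverse inclusion $\dom T\times\mul T\subseteq T^{2}$ follows from linearity of $T^{2}$: the elements $(x,0)$ (composing $(x,z)\in T$ with $(z,0)\in T$) and $(0,m)$ (composing $(0,0)\in T$ with $(0,m)\in T$) both lie in $T^{2}$, and they span everything in sight.

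The main obstacle is the forward direction of part $2$: extracting $\ran T\subseteq\ker T$ from the seemingly weaker equation $T^{2}=\dom T\times\mul T$. The crucial observation is that for any $y\in\dom T$ the fiber $Ty=\{z:(y,z)\in T\}$ is an affine coset $z_{0}+\mul T$ of the multivalued part (since differences of elements of $Ty$ lie in $\mul T$). Now for $y\in\ran T$, picking $x$ with $(x,y)\in T$ and invoking $T^{2}\subseteq\dom T\times\mul T$ forces every element of $Ty$ to lie in $\mul T$, i.e., $Ty\subseteq\mul T$; but a coset of a subspace contained in that subspace must coincide with it, hence $0\in Ty$ and $y\in\ker T$.
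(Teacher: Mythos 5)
Your proof is correct and follows essentially the same route as the paper's: part $1$ via Proposition \ref{prop2.2} together with the subtraction trick (which the paper packages in relation calculus as $T^{2}=TI_{\ran T}T\subseteq TT^{-1}T=T$), and part $2$ by the same element chase (the paper phrases the converse through the identity $T=P_{\ker T}T$ and the forward direction by exactly your coset/subtraction observation). No gaps.
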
 

\begin{proof} $1.$ If $T$ is a multivalued projection, by Proposition \ref{prop2.2}, $I_{\ran T} \subseteq T.$ Conversely, if $I_{\ran T} \subseteq T$ then $\ran T \subseteq \dom T$ and  $I_{\ran T} T \subseteq T^2,$ or $T\subseteq T^2.$ On the other hand, taking inverses, $I_{\ran T} \subseteq T^{-1}.$ Then $T^2=TI_{\ran T} T\subseteq T T^{-1} T=T $

$2.$ Suppose that $T^2=\dom T \times \mul T$ and $\ran T \subseteq \dom T.$ Given $x \in \ran T$, there exist $y,z\in \HH$ such that $(y,x), (x,z) \in  T$ or $(y,z) \in T^2.$ In this case, $z \in \mul T$ or $(0,z) \in T$. Then $(x,0) \in T$ or $x \in \ker T.$ Conversely,  if $\ran T \subseteq \ker T$ then $\ran T \subseteq \dom T,$ and  $T=P_{\ker T}T.$ On the other hand, it always holds that $TP_{\ker T}=(\ker T \oplus \ker T^{\perp}) \times  \mul T.$ Then 
$$T^2=TP_{\ker T}T=((\ker T \oplus \ker T^{\perp}) \times  \mul T)T=\dom T \times \mul T,$$ because $\ran T \subseteq \ker T.$ 
\end{proof}

\begin{prop} If $T \in \lr(\HH)$ is a multivalued projection (a multivalued nilpotent) then $T^*$ and $\ol{T}$ are multivalued projections (multivalued nilpotents, respectively).  
\end{prop}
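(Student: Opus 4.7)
The strategy is to reduce both claims to the characterizations established in Proposition \ref{lemamul}: $T \in \Sp(\HH)$ iff $I_{\ran T} \subseteq T$, and $T$ is multivalued nilpotent iff $\ran T \subseteq \ker T$. The starting point is the explicit description $T = I_{\ran T} \ \hat{+}\ (\ker T \times \{0\})$ from Proposition \ref{prop2.2}, together with the following auxiliary inclusion, valid for any $T \in \lr(\HH)$ and immediate from evaluating the defining condition of $T^*$ on the pairs $(v,0) \in T$ with $v \in \ker T$:
\[
\ran T^* \subseteq (\ker T)^\perp.
\]

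For the multivalued projection assertion on $T^*$, I would next argue that $I_{(\ker T)^\perp} \subseteq T^*$. Proposition \ref{prop2.2} forces $g - f \in \ker T$ for every $(f,g) \in T$, so for $x \in (\ker T)^\perp$ the condition $\PI{g}{x} = \PI{f}{x}$ placing $(x,x)$ into $T^*$ holds automatically. Combined with the auxiliary inclusion, this produces the sandwich $I_{\ran T^*} \subseteq I_{(\ker T)^\perp} \subseteq T^*$, and Proposition \ref{lemamul} delivers $T^* \in \Sp(\HH)$. For $\ol{T} = T^{**}$, I would simply re-apply this argument to the multivalued projection $T^*$.

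For the nilpotent case, $\ran T \subseteq \ker T$ gives $(\ker T)^\perp \subseteq (\ran T)^\perp = \ker T^*$, so chaining with the auxiliary inclusion yields $\ran T^* \subseteq (\ker T)^\perp \subseteq \ker T^*$, whence $T^*$ is multivalued nilpotent by Proposition \ref{lemamul}. Once again, $\ol{T} = T^{**}$ is handled by a second application of the argument to $T^*$.

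No real obstacle is anticipated; the only subtlety worth flagging is that the cleanest characterization $I_{\ran T} \subseteq T$ does not pass directly to $T^*$ by taking adjoints, since adjoints reverse inclusions and would only give $T^* \subseteq (I_{\ran T})^*$, which is the wrong direction. The fix is precisely the two-sided squeeze $I_{\ran T^*} \subseteq I_{(\ker T)^\perp} \subseteq T^*$ supplied by the auxiliary inclusion together with the identity $g - f \in \ker T$ coming from Proposition \ref{prop2.2}.
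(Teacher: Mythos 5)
Your proof is correct. The auxiliary inclusion $\ran T^* \subseteq (\ker T)^\perp$ is right (evaluate the adjoint condition on the pairs $(v,0)\in T$), the identity $g-f\in\ker T$ for $(f,g)\in T$ does follow from $T=I_{\ran T}\ \hat{+}\ (\ker T\times\{0\})$, and the two-sided squeeze $I_{\ran T^*}\subseteq I_{(\ker T)^\perp}\subseteq T^*$ legitimately feeds into Proposition \ref{lemamul}; passing to $\ol{T}=T^{**}$ by a second application is exactly how the paper handles the closure as well. The difference is in how much you prove from scratch. For the nilpotent half your argument coincides with the paper's: both reduce to the chain $\ran T^*\subseteq(\ker T)^\perp\subseteq(\ran T)^\perp=\ker T^*$ (the paper routes the first inclusion through $\clran T^*=(\ker\ol{T})^\perp\subseteq(\ker T)^\perp$, you get it by testing the adjoint on $\ker T\times\{0\}$; these are interchangeable). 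For the projection half, however, the paper simply cites \cite{Cross,Labrousse}, i.e., the explicit formula $P_{\M,\N}^*=P_{\N^\perp,\M^\perp}$ of \eqref{L2}, whereas you give a self-contained argument. What the citation buys is more information: it identifies $\ran T^*$ and $\ker T^*$ exactly (as $\N^\perp$ and $\M^\perp$), which the paper uses later. What your argument buys is uniformity: the same auxiliary inclusion drives both the idempotent and the nilpotent case, and the remark about why one cannot just take adjoints in $I_{\ran T}\subseteq T$ (adjoints reverse inclusions) is a worthwhile observation. No gap.
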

\begin{proof} The result was proved for multivalued projections in \cite{Cross, Labrousse}; see \eqref{L2} below.
	
Let $T$ be a multivalued nilpotent. Since $\ol{T}=(T^*)^*$ we only need to show the result for $T^*.$
By Proposition \ref{lemamul}, $\ran T \subseteq \ker T$ and then $(\ker T)^{\perp} \subseteq (\ran T)^{\perp}=\ker T^*.$ On the other hand, $\ker T \subseteq \ker \ol{T}$ implies that $\clran T^*=(\ker \ol{T})^{\perp} \subseteq  (\ker T)^{\perp}.$ Therefore $\ran T^*\subseteq \ker T^*,$ and by  Proposition \ref{lemamul}  once again, $T^*$ is a multivalued nilpotent.
\end{proof}

For multivalued projections, the formulae
\begin{equation}\label {L2} 
P_{\M, \N}^*=P_{\N^{\perp},\M^{\perp}} \mbox{ and } \ol{P_{\M, \N}}=P_{\ol{\M}, \ol{\N}},
\end{equation} 
hold \cite{Cross, Labrousse}. Then $P_{\M, \N} \in \MP(\HH)$ if and only if $\M$ and $\N$ are closed.

\bigskip
In  \cite[Theorem 2]{Greville}, Greville proved that  if $\M$ and $\N$ are  (finite dimensional) complementary subspaces then 
\begin{equation}\label{Greville}
P_{\M \pl \N}=(P_{\N^{\perp}}P_\M)^{\dagger}. 
\end{equation} 
The same formula holds for closed subspaces in a Hilbert space $\HH$ such that $\ol{\M \dotplus \N}=\HH$ \cite{CorachMaestri}.

On the other hand, if $T$ is a closed operator, its Moore-Penrose inverse can be given in terms of (the linear relation) $T^{-1}$  \cite{Nashed,TA} as 
\begin{equation} \label{alvarezMP}
	T^{\dagger}:=P_{\ker T^{\perp}} T^{-1} P_{\ker {T^*}^{\perp}}.
\end{equation}

If $\M$ and $\N$ are closed subspaces such that $\ol{\M \dotplus \N}=\HH,$ the formula \eqref{alvarezMP} applied to $T=P_{\N^{\perp}}P_{\M}$ gives
$$(P_{\N^{\perp}}P_\M)^{\dagger}=P_{\M}(P_{\N^{\perp}}P_\M)^{-1}P_{\N^{\perp}}$$ because $(\ker( P_{\N^{\perp}}P_\M))^{\perp}=\M$ and $(\ker(P_\M P_{\N^{\perp}}))^{\perp}=\N^{\perp}.$ Then, if $P_{\M \pl \N}$ is a densely defined closed projection, by \eqref{Greville},
$$P_{\M \pl \N}=P_{\M}(P_{\N^{\perp}}P_\M)^{-1}P_{\N^{\perp}}.$$
A similar result holds for multivalued projections if an extra hypothesis is required. 

\begin{prop} \label{Grevilleprop} Let $\M, \N$ be subspaces of $\HH$ such that $\M\subseteq \N\oplus \N^{\perp}.$ Then
	$$\PMN=P_\M((I-P_\N)P_\M)^{-1}(I-P_\N).$$
\end{prop}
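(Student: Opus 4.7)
The plan is to establish the identity by unfolding the composition on the right-hand side and matching it, pair-by-pair, against the description $\PMN = \{(x,y) : y\in\M,\ x-y\in\N\}$ supplied by Proposition \ref{prop2.2}. Write $Q := I - P_\N$ and $T := P_\M(QP_\M)^{-1}Q$. Using the definitions of product and inverse of linear relations, a pair $(x,y)$ lies in $T$ precisely when there exists $w\in\HH$ such that $y = P_\M w$ and $QP_\M w = Qx$; in particular $y\in\M$.

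For the inclusion $\PMN\subseteq T$, take $(x,y)\in\PMN$, so $y\in\M$ and $x-y\in\N$. The hypothesis $\M\subseteq\N\oplus\N^\perp$ guarantees $y\in\dom Q$, and the witness $w:=y$ satisfies $P_\M w = y$ and $QP_\M w = Qy = Qx$, the last equality because $y-x\in\N\subseteq\ker Q$. For the reverse inclusion $T\subseteq\PMN$, suppose $(x,y)\in T$ with witness $w$. Then $y = P_\M w \in \M$, and the hypothesis together with $P_\M w = y$ yields $y\in\dom Q$, so the equation $QP_\M w = Qx$ collapses to $Qy = Qx$; hence $y-x\in\ker Q = \N$, and Proposition \ref{prop2.2} places $(x,y)$ in $\PMN$.

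The main obstacle is the domain and kernel book-keeping for $Q = I - P_\N$ when $\N$ is not closed: one needs $\dom Q = \N\oplus\N^\perp$ and $\ker Q = \N$ (rather than $\ol{\N}$). The assumption $\M\subseteq\N\oplus\N^\perp$ is precisely what aligns the range of $P_\M$ with $\dom Q$, so that the composition $QP_\M$ is defined on all of $\dom P_\M$ and the right-hand side recovers the full relation $\PMN$ rather than the enlargement $P_{\ol{\M},\ol{\N}}$. Once the domains match, the two inclusions become routine verifications.
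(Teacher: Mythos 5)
Your proof is correct. It takes a more elementary route than the paper's: the paper computes the right-hand side entirely within the calculus of relations, splitting $((I-P_\N)P_\M)^{-1}$ as $P_\M^{-1}(I-P_\N)^{-1}$ via $(RT)^{-1}=T^{-1}R^{-1}$, collapsing $P_\M P_\M^{-1}$ to $I_\M$ and $(I-P_\N)^{-1}(I-P_\N)$ to $I_{\N\oplus\N^\perp}\ \hat{+}\ (\N\times\{0\})$ by \eqref{TT1}, and finishing with $I_\M I_{\N\oplus\N^\perp}=I_\M$ and Lemma \ref{sumasubs}; you instead unfold the composite relation pairwise and check the two inclusions directly against Proposition \ref{prop2.2}, using no machinery beyond the definitions. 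Both arguments pivot on exactly the facts you single out, namely $\dom(I-P_\N)=\N\oplus\N^\perp$ and $\ker(I-P_\N)=\N$ (not $\ol{\N}$), with the hypothesis $\M\subseteq\N\oplus\N^\perp$ ensuring $\M\subseteq\dom(I-P_\N)$, so the mathematical content is the same; your version trades the paper's brevity for self-containedness. The only detail left implicit in your forward inclusion is that $x\in\dom(I-P_\N)$, which is needed for $Qx$ to be defined; it follows from $x=y-(y-x)$ with $y\in\M\subseteq\N\oplus\N^\perp$ and $y-x\in\N\subseteq\N\oplus\N^\perp$.
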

\begin{proof}  By \eqref{TT1} and the fact that $I_\M I_{\N \oplus \N^{\perp}}=I_\M$ we get that
$P_\M((I-P_\N)P_\M)^{-1}(I-P_\N)=$$P_\M{P_\M}^{-1}(I-P_\N)^{-1}(I-P_\N)$$=I_\M(I_{\N \oplus \N^{\perp}} \ \hat{+} \ (\{0\} \times  \N))=I_\M(I_{\N \oplus \N^{\perp}} \ \hat{+} \ (\N \times \{0\}))=I_\M \ \hat{+} \  (\N \times \{0\})=\PMN.$
\end{proof}

\medskip
As a corollary, we obtain another formula for closed multivalued projections that generalizes the one given by Pt\'ak \cite{Ptak} for bounded projections. If $\M$ and $\N$ are closed subspaces such that $\M \dotplus \N=\HH$ then $ (I-P_\N P_\M)$ is invertible and 
$$P_{\M \pl \N}=(I-P_\N P_\M)^{-1}P_{\N^{\perp}},$$ see also \cite[Theorem 3.3]{Greville}.

\begin{cor}[{cf. \cite[Proposition 1.2]{Ptak}}] \label{ptak} Let $\PMN \in \MP(\HH).$ Then
	\begin{equation*} \label{ptak0} \PMN=(I-P_\N P_\M)^{-1}P_{\N^{\perp}}|_{\M+\N}.
	\end{equation*}
\end{cor}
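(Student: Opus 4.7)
The plan is to deduce the formula from Proposition \ref{Grevilleprop}. Since $\N$ is closed, $\N \oplus \N^{\perp} = \HH$, so the hypothesis $\M \subseteq \N \oplus \N^{\perp}$ is automatic and Proposition \ref{Grevilleprop} gives
\begin{equation*}
\PMN = P_\M (P_{\N^{\perp}} P_\M)^{-1} P_{\N^{\perp}}.
\end{equation*}
The key algebraic move is to shift the inversion from $P_{\N^{\perp}} P_\M$ onto $I - P_\N P_\M$.

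For this I would use the identity $P_{\N^{\perp}} P_\M = (I - P_\N) P_\M = (I - P_\N P_\M) P_\M$, valid as bounded operators since $P_\M^2 = P_\M$. Applying $(RT)^{-1} = T^{-1} R^{-1}$ then gives $(P_{\N^{\perp}} P_\M)^{-1} = P_\M^{-1} (I - P_\N P_\M)^{-1}$. Substituting into the Greville-type formula and collapsing the front factor with $P_\M P_\M^{-1} = P_{\ran P_\M,\, \mul P_\M} = I_\M$ (the formula $TT^{-1} = P_{\ran T, \mul T}$ recalled in Example \ref{remark1}), one obtains
\begin{equation*}
\PMN = I_\M (I - P_\N P_\M)^{-1} P_{\N^{\perp}}.
\end{equation*}

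To reach the stated form, I would absorb the outer $I_\M$ into the domain restriction via Lemma \ref{lemalr}. The inclusion $\PMN \subseteq (I - P_\N P_\M)^{-1} P_{\N^{\perp}}|_{\M+\N}$ is straightforward: any $(m+n, m) \in \PMN$ with $m \in \M$, $n \in \N$ satisfies $m+n \in \M + \N$ and $(I - P_\N P_\M) m = m - P_\N m = P_{\N^{\perp}}(m+n)$. For the matching domain and multivalued part: given $x = m + n \in \M + \N$, the vector $P_{\N^{\perp}} x = (I - P_\N P_\M) m$ lies in $\ran(I - P_\N P_\M)$, so the right-hand side has domain exactly $\M + \N$; and its multivalued part equals $\ker(I - P_\N P_\M) = \M \cap \N = \mul \PMN$, where the kernel computation is the usual norm argument ($y = P_\N P_\M y$ forces $y \in \N$ and $\|P_\M y\| = \|y\|$, hence $y \in \M$). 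Lemma \ref{lemalr} then gives equality.

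The main obstacle is keeping careful track of the fact that $I - P_\N P_\M$ need not be invertible as a bounded operator: its kernel $\M \cap \N$ may be nontrivial, and its range need not be closed when $\M + \N$ fails to be closed. Consequently $(I - P_\N P_\M)^{-1}$ must be treated strictly as a linear relation, and the restriction to $\M + \N$ in the statement is essential: without it the right-hand side would have strictly larger domain in general, which is precisely why the Pt\'ak formula for bounded projections (where $\M + \N = \HH$) needs no such restriction while the present multivalued version does.
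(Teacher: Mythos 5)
Your proof is correct and follows essentially the same route as the paper: both start from Proposition \ref{Grevilleprop} and hinge on the identity $(I-P_\N P_\M)P_\M=P_{\N^{\perp}}P_\M$ together with the computation $\ker(I-P_\N P_\M)=\M\cap\N$. The only difference is organizational: the paper multiplies $\PMN$ on the left by $I-P_\N P_\M$, simplifies to $P_{\N^{\perp}}|_{\M+\N}$, and then multiplies by the inverse relation, absorbing the resulting $\{0\}\times(\M\cap\N)$ into $\mul \PMN$, whereas you transfer the inverse via $(RT)^{-1}=T^{-1}R^{-1}$ and close with a domain/multivalued-part comparison through Lemma \ref{lemalr}.
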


\begin{proof} By Proposition \ref{Grevilleprop} and \eqref{TT1},
	\begin{align*} 
		\noindent (I-P_\N P_\M)\PMN&=(I-P_\N P_\M)P_\M(P_{\N^{\perp}}P_\M)^{-1}P_{\N^{\perp}}\\
		&=P_{\N^{\perp}} P_\M(P_{\N^{\perp}}P_\M)^{-1}P_{\N^{\perp}}=I_{P_{\N^{\perp}}(\M)}P_{\N^{\perp}}=P_{\N^{\perp}}|_{\M+\N}.
	\end{align*}
	Then, multiplying  both sides of the equality $(I-P_\N P_\M)\PMN=P_{\N^{\perp}}|_{\M+\N}$  by  $(I-P_\N P_\M)^{-1}$ we get that
	$$(I \ \hat{+} \ (\{0\} \times \ker(I-P_\N P_\M)))\PMN=(I-P_\N P_\M)^{-1}P_{\N^{\perp}}|_{\M+\N}.$$ 
	But $\ker(I-P_\N P_\M)=\M\cap \N.$ In fact, if $x \in \ker(I-P_\N P_\M)$ then $x=P_\N P_\M x,$  so that $\Vert P_\M x\Vert^2 = \PI{P_\M x}{P_\M P_\N P_\M x}=\Vert P_\N P_\M x\Vert^2=\Vert x \Vert^2.$
	Hence $x \in \M \cap \N.$ Therefore $\ker(I-P_\N P_\M)\subseteq \M\cap \N \subseteq \ker(I-P_\N P_\M)$ and  $(I \ \hat{+} \ (\{0\} \times (\M \cap \N)))\PMN=(I-P_\N P_\M)^{-1}P_{\N^{\perp}}|_{\M+\N}$  or
$\PMN=(I-P_{\N}P_{\M})^{-1}P_{\N^{\perp}}|_{\M+\N}.$
\end{proof}

\section{Decompositions of multivalued projections} 	\label{sec:four}

Next, we study decomposable and continuous multivalued projections and some of their distinguished orthogonal decompositions. 

\subsection{Decomposable and continuous multivalued projections}
 
For $E:=\PMN,$ consider $P=P_{\overline{\dom} E^*}=P_{\ol{\M^{\perp}+\N^{\perp}}}$ and $E_0=E \cap (\ol{\M+\N} \times \ol{\M^{\perp}+\N^{\perp})}.$

\begin{lema} \label{E_0} The operator  $(\PMN)_0$ is a projection. More precisely, 
	$$(\PMN)_0=P_{\M \cap \ol{\M^{\perp}+\N^{\perp}} \pl \N}.$$
\end{lema}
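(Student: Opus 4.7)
The plan is to unpack the definitions directly and then isolate the one nontrivial point.

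By Proposition \ref{prop2.2}, $E := \PMN = I_\M \ \hat{+}\ (\N \times \{0\})$, so the elements of $E$ are exactly the pairs $(m+n,m)$ with $m\in\M$, $n\in\N$. By the formulae \eqref{L2}, $E^* = P_{\N^\perp,\M^\perp}$, hence $\dom E^* = \N^\perp + \M^\perp$ and $\cldom E^* = \ol{\M^\perp + \N^\perp}$. Because every $(x,y)\in E$ already has $x\in\dom E \subseteq \cldom E$, the intersection defining $E_0$ only imposes a constraint on the second coordinate, so I would first write
\[
E_0 \;=\; \{(m+n,m)\,:\, m\in\M\cap\ol{\M^\perp+\N^\perp},\ n\in\N\}.
\]
Setting $\M_1 := \M\cap\ol{\M^\perp+\N^\perp}$, this is exactly $I_{\M_1}\ \hat{+}\ (\N\times\{0\})$, which formally has the shape of a multivalued projection with range $\M_1$ and kernel $\N$.

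The only nontrivial step, and the one I expect to be the crux, is to verify that this componentwise sum is actually a (single-valued) projection, i.e.\ that $\M_1\cap\N = \{0\}$, so that $E_0 = P_{\M_1 \pl \N}$ is legitimately written with the $\pl$ symbol. For this I would argue directly on vectors: let $z\in\M\cap\N\cap\ol{\M^\perp+\N^\perp}$. Since $z\in\M\cap\N$, we have $\PI{z}{u}=\PI{z}{v}=0$ for every $u\in\M^\perp$ and $v\in\N^\perp$, hence $z\perp (\M^\perp+\N^\perp)$ and, by continuity of the inner product, $z\perp\ol{\M^\perp+\N^\perp}$. Combined with $z\in\ol{\M^\perp+\N^\perp}$ this forces $\|z\|^2=0$, so $z=0$.

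With this orthogonality observation in hand, the identification $E_0 = I_{\M_1} \ \hat{+}\ (\N\times\{0\}) = P_{\M_1 \pl \N}$ is valid and yields a (single-valued) projection, which is the claim. Everything else reduces to a direct substitution in the formula $E = I_\M\ \hat{+}\ (\N\times\{0\})$ and the description of $\cldom E^*$ via \eqref{L2}; no deeper machinery from the preliminaries is needed.
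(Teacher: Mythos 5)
Your proof is correct and follows essentially the same route as the paper's: compute $E_0=E\cap(\ol{\M+\N}\times\ol{\M^\perp+\N^\perp})$ directly from $E=I_\M\ \hat{+}\ (\N\times\{0\})$ and the formula $E^*=P_{\N^\perp,\M^\perp}$, observing that only the second coordinate is constrained. Your explicit verification that $\M\cap\ol{\M^\perp+\N^\perp}\cap\N=\{0\}$ is a detail the paper leaves implicit (it also follows from the general fact that $T_0$ is always an operator), and it is a welcome addition.
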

\begin{proof} By definition, $E_0=E \cap (\ol{\M+\N} \times \ol{\M^{\perp}+\N^{\perp}})=\{ (m+n,m): m \in \M \cap  \ol{\M^{\perp}+\N^{\perp}}, n \in \N\}=P_{\M \cap \ol{\M^{\perp}+\N^{\perp}} \pl \N}.$
\end{proof}
	
\begin{prop} \label{propdecom} The following conditions are equivalent:
\begin{enumerate} 
	\item $P_{\M, \N}$ is decomposable;
	\item $\M+\N=\M\cap \overline{\M^\bot+\N^\bot}\dotplus\N;$
	\item  $P_{\ol{\M} \cap \ol{\N}}(\M) = \M \cap \N,$
	\item $\M=\M\cap \overline{\M^\bot+\N^\bot}\oplus \M\cap\N.$
	\end{enumerate}
If any of the above conditions holds $\ol{\M \cap \N}=\ol{\M}\cap \ol{\N}$ and
$$P_{\M, \N}=P_{\M\cap (\M\cap N)^\bot \pl \N}\ \hat{\oplus} \ (\{0\} \times \M \cap \N).$$
\end{prop}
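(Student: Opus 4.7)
The plan is to establish the equivalences $(1)\Leftrightarrow(2)\Leftrightarrow(4)\Leftrightarrow(3)$ and then read off the two consequences at the end. Throughout, write $E:=P_{\M,\N}$, so that $\dom E=\M+\N$, $\ran E=\M$, $\ker E=\N$, $\mul E=\M\cap\N$, and, by \eqref{L2}, $E^*=P_{\N^\perp,\M^\perp}$ with $\cldom E^*=\overline{\M^\perp+\N^\perp}$. Set $\mathcal A:=\M\cap\overline{\M^\perp+\N^\perp}$. The key identity I will use repeatedly is $(\overline\M\cap\overline\N)^\perp=\overline{\M^\perp+\N^\perp}$, so that $\mathcal A=\M\cap(\overline\M\cap\overline\N)^\perp$ and in particular $\mathcal A\perp(\M\cap\N)$ (and hence $\mathcal A\cap\N=\{0\}$).

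For $(1)\Leftrightarrow(2)$, I would just quote Theorem \ref{decomthm} together with Lemma \ref{E_0}: $E$ is decomposable iff $\dom E_0=\dom E$, and $\dom E_0=\mathcal A\dotplus\N$ while $\dom E=\M+\N$. For $(2)\Leftrightarrow(4)$, note $\mathcal A+\N$ is always a direct sum. If (2) holds, then given $x\in\M$ write $x=a+n$ with $a\in\mathcal A$, $n\in\N$; since $n=x-a\in\M$, in fact $n\in\M\cap\N$, so $\M=\mathcal A+(\M\cap\N)$, which is orthogonal by the remark above, giving (4). Conversely, (4) immediately gives $\M\subseteq\mathcal A+\N$, hence $\M+\N=\mathcal A\dotplus\N$.

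For $(3)\Leftrightarrow(4)$, I rely on the observation that for $x\in\M$, the residue $x-P_{\overline\M\cap\overline\N}(x)$ lies in $\M\cap(\overline\M\cap\overline\N)^\perp=\mathcal A$. Assuming (4), decompose $x=a+y$ with $a\in\mathcal A$, $y\in\M\cap\N$; since $a\perp\overline\M\cap\overline\N$ and $y\in\overline\M\cap\overline\N$, we get $P_{\overline\M\cap\overline\N}(x)=y\in\M\cap\N$, and the reverse inclusion $\M\cap\N\subseteq P_{\overline\M\cap\overline\N}(\M)$ is trivial (fixed points). Conversely, if (3) holds then for each $x\in\M$ the projection $m:=P_{\overline\M\cap\overline\N}(x)$ lies in $\M\cap\N$, and $x-m\in\mathcal A$ by the residue observation, giving $\M=\mathcal A+(\M\cap\N)$ (orthogonal), i.e.\ (4).

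Finally, under any of these equivalent conditions, Theorem \ref{decomthm} gives $\mul\overline E=\overline{\mul E}$; but by \eqref{L2}, $\overline E=P_{\overline\M,\overline\N}$, so $\mul\overline E=\overline\M\cap\overline\N$ and $\overline{\mul E}=\overline{\M\cap\N}$, yielding $\overline{\M\cap\N}=\overline\M\cap\overline\N$. The componentwise sum formula is then immediate from \eqref{Tdecom} and Lemma \ref{E_0}, once one notes that (4) identifies $\mathcal A$ with $\M\ominus(\M\cap\N)=\M\cap(\M\cap\N)^\perp$, and that $E_{\mul}=\{0\}\times(\M\cap\N)$. The main obstacle is really just the bookkeeping in $(3)\Leftrightarrow(4)$; once the identity $(\overline\M\cap\overline\N)^\perp=\overline{\M^\perp+\N^\perp}$ is isolated, everything falls into place.
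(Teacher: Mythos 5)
Your proof is correct, and it reaches the same conclusions from the same basic ingredients (Lemma \ref{E_0}, Theorem \ref{decomthm}, \eqref{L2}, and the identity $\overline{\M^\perp+\N^\perp}=(\overline{\M}\cap\overline{\N})^\perp$), but it routes the equivalences differently. The paper proves $i)\Leftrightarrow ii)$ exactly as you do, then gets $i)\Leftrightarrow iii)$ in one line from the criterion ``$T$ is decomposable iff $\ran T_{\sing}\subseteq\mul T$'' of Theorem \ref{decomthm}, computing $\ran E_{\sing}=P_{\overline{\M}\cap\overline{\N}}(\M)$, and dismisses $iii)\Leftrightarrow iv)$ as straightforward. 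You instead prove $ii)\Leftrightarrow iv)$ and $iv)\Leftrightarrow iii)$ by direct subspace bookkeeping, using only the definition $\dom E_0=\dom E$ of decomposability. What your route buys is that it makes the ``straightforward'' step explicit and avoids invoking the singular part altogether for the equivalences among $ii)$, $iii)$, $iv)$; what the paper's route buys is brevity, since $\ran E_{\sing}\subseteq\mul E$ is literally condition $iii)$ after the one-line computation of $\ran E_{\sing}$ (the reverse inclusion $\M\cap\N\subseteq P_{\overline{\M}\cap\overline{\N}}(\M)$ being automatic). The final assertions are handled identically: Theorem \ref{decomthm} gives $\overline{\M\cap\N}=\overline{\M}\cap\overline{\N}$, and Lemma \ref{E_0} gives the formula. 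One small imprecision worth flagging: your stated ``residue observation'' --- that $x-P_{\overline{\M}\cap\overline{\N}}(x)\in\M\cap(\overline{\M}\cap\overline{\N})^\perp$ for every $x\in\M$ --- is false in general, since $P_{\overline{\M}\cap\overline{\N}}(x)$ need not lie in $\M$; but in the only place you actually invoke it you have already shown $P_{\overline{\M}\cap\overline{\N}}(x)\in\M\cap\N\subseteq\M$, so the argument as used is sound.
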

	
\begin{proof} $i)\Leftrightarrow ii)$ By definition, $P_{\M, \N}$ is decomposable if and only if $\M+\N=\dom E=\dom E_0=\M\cap \overline{\M^\bot+\N^\bot} \dotplus \N.$

$i)\Leftrightarrow iii)$ Theorem \ref{decomthm}  establishes that $P_{\M, \N}$ is decomposable if and only if $\ran E_{\sing} \subseteq \mul E.$ But $\ran E_{\sing}=\ran(I-P)E=P_{\ol{\M}\cap\ol{\N}}(\M).$ 

$iii)\Leftrightarrow iv)$ is straightforward.

If any of the above conditions holds then $E=E_0 \ \hat{\oplus} \ E_{\mul}$ and, by Theorem \ref{decomthm}, $\ol{\M \cap \N}=\ol{\M}\cap \ol{\N}.$  Then $E_0=P_{\M\cap (\M\cap N)^\bot \pl \N}.$
\end{proof}

\begin{cor}\label{PyI-P} If $\M, \N$ are closed subspaces of $\HH$ then
$$P_{\M, \N}=P_{\M \ominus (\M\cap \N) \pl \N} \ \hat{\oplus} \ (\{0\} \times \M \cap \N).$$
\end{cor}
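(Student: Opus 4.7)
The plan is to deduce the corollary directly from Proposition \ref{propdecom} by checking that, when $\M$ and $\N$ are both closed, the equivalent decomposability condition (iii) is automatically satisfied, so that the displayed decomposition in the proposition applies.

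First I would verify condition (iii) of Proposition \ref{propdecom}, namely that $P_{\ol{\M}\cap\ol{\N}}(\M)=\M\cap\N$. Since $\M$ and $\N$ are closed, the left-hand side simplifies to $P_{\M\cap\N}(\M)$. The inclusion $P_{\M\cap\N}(\M)\subseteq\M\cap\N$ is immediate because the range of the orthogonal projection $P_{\M\cap\N}$ is $\M\cap\N$. For the reverse inclusion, note that $\M\cap\N\subseteq\M$ and that $P_{\M\cap\N}$ restricts to the identity on $\M\cap\N$, whence $\M\cap\N=P_{\M\cap\N}(\M\cap\N)\subseteq P_{\M\cap\N}(\M)$. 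This establishes decomposability.

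Having checked (iii), Proposition \ref{propdecom} gives
\begin{equation*}
P_{\M, \N}=P_{\M\cap (\M\cap \N)^\bot \pl \N}\ \hat{\oplus} \ (\{0\} \times \M \cap \N).
\end{equation*}
The final step is a pure notational rewriting: since $\M\cap\N\subseteq\M$, by the convention introduced in the Preliminaries one has $\M\ominus(\M\cap\N)=\M\cap(\M\cap\N)^{\perp}$, which turns the previous identity into the asserted formula.

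There is no real obstacle here; the only thing one must be careful about is ensuring that closedness of both $\M$ and $\N$ is used at the right place, namely to identify $\ol{\M}\cap\ol{\N}$ with $\M\cap\N$ in the verification of (iii). Note that Proposition \ref{ORLabrousse} is not needed, since closedness of the individual subspaces is already enough.
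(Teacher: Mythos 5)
Your proof is correct and follows essentially the same route as the paper: both reduce the corollary to the concluding formula of Proposition \ref{propdecom}. The only difference is how decomposability is justified — the paper simply cites the general fact that closed relations are decomposable (\cite[Corollary 3.15]{Hassi2}), whereas you verify criterion $iii)$ directly, which is elementary, self-contained, and equally valid since $\ol{\M}\cap\ol{\N}=\M\cap\N$ for closed $\M,\N$ and $P_{\M\cap\N}$ fixes $\M\cap\N\subseteq\M$ pointwise.
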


\begin{proof} If $\M$ and $\N$ are closed then $P_{\M,\N}$ is closed and therefore decomposable \cite[Corollary 3.15]{Hassi2}. 
\end{proof}

The fact that $\PMN$ is decomposable does not imply that $P_{\N,\M}$ is decomposable as the next example shows. 

\begin{Example} Consider $\M, \N$ proper subspaces of $\HH$ such that  $\M\subsetneq \N$ and $\M$ is dense. For example take $A \in L(\HH)$ positive (semi-definite) such that $A$ is not invertible and set $\M:=\ran A$ and $\N:=\ran A^{1/2}.$  Then $P_{\M,\N}$ is decomposable but $P_{\N,\M}$ is not  decomposable. In fact, it is clear that $\ol{\M} \cap \ol{\N}=\HH$ so that $P_{\ol{\M} \cap \ol{\N}}(\M)=\M = \M \cap \N.$ Then, by Proposition \ref{propdecom}, $P_{\M,\N}$ is decomposable. On the other hand, $ \M \cap \N \subsetneq \N =P_{\ol{\M} \cap \ol{\N}}(\N)$ then $P_{\N,\M}$ is not decomposable by Proposition \ref{propdecom}.
\end{Example}

\begin{prop} \label{decomPNM} The following conditions are equivalent:
\begin{enumerate}
	\item $\ol{\M}\cap \ol{\N}=\M \cap \N;$
	\item $\PMN$ is decomposable and $\M \cap \N$ is closed;
	\item $P_{\N,\M}$ is decomposable and $\M \cap \N$ is closed;
\end{enumerate}
\end{prop}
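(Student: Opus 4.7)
The plan is to establish $(ii)\Rightarrow(i)$, $(iii)\Rightarrow(i)$ as easy consequences of the final statement of Proposition \ref{propdecom}, and then to deduce both $(i)\Rightarrow(ii)$ and $(i)\Rightarrow(iii)$ from condition $(iii)$ of Proposition \ref{propdecom} by exploiting the symmetric role that $\M$ and $\N$ play in the assumption $\ol{\M}\cap\ol{\N}=\M\cap\N$.

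First, suppose $\PMN$ is decomposable with $\M\cap\N$ closed. By the closing remark of Proposition \ref{propdecom}, $\ol{\M\cap\N}=\ol{\M}\cap\ol{\N}$, and since $\M\cap\N$ is closed this gives $\M\cap\N=\ol{\M}\cap\ol{\N}$, proving $(i)$. The implication $(iii)\Rightarrow(i)$ follows by the same argument applied to $P_{\N,\M}$ (whose multivalued part is also $\M\cap\N$).

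Conversely, assume $(i)$. Then $\M\cap\N$ is closed, being equal to the closed set $\ol{\M}\cap\ol{\N}$. To check $(ii)$, I appeal to the equivalence $(i)\Leftrightarrow(iii)$ in Proposition \ref{propdecom}: I need $P_{\ol{\M}\cap\ol{\N}}(\M)=\M\cap\N$. Using $(i)$, the orthogonal projection $P_{\ol{\M}\cap\ol{\N}}$ equals $P_{\M\cap\N}$; since $\M\cap\N\subseteq\M$, every element of $\M\cap\N$ is fixed by $P_{\M\cap\N}$, so $\M\cap\N\subseteq P_{\M\cap\N}(\M)$, and the reverse inclusion holds trivially because $P_{\M\cap\N}(\HH)=\M\cap\N$. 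Hence $\PMN$ is decomposable. Exchanging $\M$ and $\N$ throughout the same argument (using that $\ol{\M}\cap\ol{\N}=\ol{\N}\cap\ol{\M}$ and $\M\cap\N\subseteq\N$) gives $P_{\N,\M}$ decomposable, establishing $(iii)$.

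There is no real obstacle here: the proof is essentially a bookkeeping exercise that exploits the criterion already recorded in Proposition \ref{propdecom} together with the symmetry of the intersection $\M\cap\N$ with respect to the pair $(\M,\N)$. The only point that needs a moment's care is the identification $P_{\ol{\M}\cap\ol{\N}}(\M)=\M\cap\N$ under hypothesis $(i)$, which is what forces $\M\cap\N$ to be closed in the first place.
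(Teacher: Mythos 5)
Your proposal is correct and follows essentially the same route as the paper: the forward direction uses the identity $\ol{\M\cap\N}=\ol{\M}\cap\ol{\N}$ that decomposability forces (the paper invokes Theorem~\ref{decomthm} directly, you invoke the closing remark of Proposition~\ref{propdecom}, which is the same fact), and the converse verifies the criterion $P_{\ol{\M}\cap\ol{\N}}(\M)=\M\cap\N$ exactly as the paper does. The only difference is that you spell out the symmetry argument for condition $iii)$, which the paper leaves implicit.
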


\begin{proof}
$i) \Leftrightarrow ii):$ If $i)$ holds then $\M \cap \N$ is closed and $P_{\ol{\M} \cap \ol{\N}}(\M)=P_{\M \cap \N}(\M)=\M \cap \N.$ By Proposition \ref{propdecom}, $\PMN$ is decomposable. See also \cite[Corollary 3.14]{Hassi2}. Conversely, suppose that $ii)$ holds. Then, by Theorem \ref{decomthm}, $\mul P_{\ol{\M}, \ol{\N}}=\clmul \PMN,$ or $\ol{\M}\cap \ol{\N}=\ol{\M \cap \N}=\M \cap \N.$
\end{proof}
	
	\begin{cor}
	Suppose that $P_{\M, \N}$ is decomposable. If $\M$ is closed, then $P_{\N, \M}$ is decomposable.
	\end{cor}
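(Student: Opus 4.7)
The plan is to use the characterization in Proposition \ref{propdecom}(iii), namely that $P_{\M,\N}$ is decomposable if and only if $P_{\ol{\M}\cap\ol{\N}}(\M)=\M\cap\N$, combined with Proposition \ref{decomPNM} which gives the symmetric equivalence $\ol{\M}\cap\ol{\N}=\M\cap\N \Leftrightarrow P_{\N,\M}$ is decomposable (and $\M\cap\N$ closed). So the strategy reduces to showing that, under the standing hypotheses, $\M\cap\N$ is closed and coincides with $\ol{\M}\cap\ol{\N}$.

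First I would exploit that $\M$ is closed to write $\ol{\M}\cap\ol{\N}=\M\cap\ol{\N}\subseteq\M$. This inclusion is the crucial geometric fact that makes the orthogonal projection $P_{\ol{\M}\cap\ol{\N}}$ act as the identity on the closed subspace $\ol{\M}\cap\ol{\N}$ \emph{without leaving $\M$}: every $x\in\ol{\M}\cap\ol{\N}$ satisfies $P_{\ol{\M}\cap\ol{\N}}x=x\in\M$, and conversely $P_{\ol{\M}\cap\ol{\N}}(\M)\subseteq\ol{\M}\cap\ol{\N}$ trivially. Hence
\begin{equation*}
P_{\ol{\M}\cap\ol{\N}}(\M)=\ol{\M}\cap\ol{\N}.
\end{equation*}

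Next, using the assumption that $P_{\M,\N}$ is decomposable, Proposition \ref{propdecom}(iii) yields $P_{\ol{\M}\cap\ol{\N}}(\M)=\M\cap\N$. Combining with the previous display gives $\ol{\M}\cap\ol{\N}=\M\cap\N$. In particular $\M\cap\N$ is closed, and condition (i) of Proposition \ref{decomPNM} holds. Invoking the implication (i)$\Rightarrow$(iii) of that proposition, we conclude that $P_{\N,\M}$ is decomposable.

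I do not expect a real obstacle here: once the correct equivalent conditions are identified, the proof is a single-line geometric observation (that a closed subspace sitting inside $\M$ is fixed by its own orthogonal projection applied to $\M$). The only point requiring care is not to confuse $\ol{\M}\cap\ol{\N}$ with $\ol{\M\cap\N}$ \emph{a priori}; the equality of these two sets is in fact part of the conclusion, not an input.
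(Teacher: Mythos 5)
Your proof is correct and follows essentially the same route as the paper: both arguments reduce the claim to showing $\ol{\M}\cap\ol{\N}=\M\cap\N$ and then invoke Proposition \ref{decomPNM}. The only (cosmetic) difference is that you reach this equality via condition $iii)$ of Proposition \ref{propdecom} together with the observation that $\ol{\M}\cap\ol{\N}\subseteq\M$ is fixed by its own projection, whereas the paper uses the consequence $\ol{\M\cap\N}=\ol{\M}\cap\ol{\N}$ and condition $iv)$ to see that $\M\cap\N$ is closed.
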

	\begin{proof}  Suppose that $\PMN$ is decomposable. Then, by Proposition \ref{propdecom}, $\ol{\M\cap \N}=\ol{\M}\cap\ol{\N}.$ Since $\M$ is closed, from $iv)$ of Proposition \ref{propdecom}, $\M \cap \N$ is closed. Then  $\ol{\M} \cap \ol{\N}=\M \cap \N$ and, by Proposition \ref{decomPNM}, $P_{\N, \M}$ is decomposable.
	\end{proof}

In \cite[Corollary 3.7]{Cross}, it is shown that $\PMN$ is continuous if and only if $\M^\bot+\N^\bot=(\M\cap\N)^\bot.$ 

\begin{cor} \label{corclosed} Suppose that $\M, \N$ are operator ranges of $\HH$ such that $\M+\N$ and $\M \cap \N$ are closed. Then $\PMN$ is decomposable and continuous. 
\end{cor}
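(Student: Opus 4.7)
The strategy is to combine Proposition \ref{ORLabrousse} on operator ranges with the characterization of decomposability from Proposition \ref{decomPNM} and the continuity criterion stated just before the corollary (from \cite[Corollary 3.7]{Cross}). Both required identities are handed to us by Proposition \ref{ORLabrousse} because $\M, \N$ are operator ranges whose sum $\M+\N$ is closed.

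First I would apply Proposition \ref{ORLabrousse} to get the two equalities
\[
\ol{\M\cap \N}=\ol{\M}\cap \ol{\N}\qquad\text{and}\qquad (\M\cap\N)^\perp=\M^\perp+\N^\perp.
\]
Next, since $\M\cap\N$ is closed by hypothesis, the first identity collapses to $\ol{\M}\cap\ol{\N}=\M\cap\N$, which is precisely condition $i)$ of Proposition \ref{decomPNM}. Invoking that proposition, I conclude that $\PMN$ is decomposable (and, as a bonus, so is $P_{\N,\M}$).

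For continuity, I would appeal to the criterion cited immediately before the statement: $\PMN$ is continuous if and only if $\M^\perp+\N^\perp=(\M\cap\N)^\perp$. But that is exactly the second equality delivered by Proposition \ref{ORLabrousse}, so continuity is immediate.

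There is no real obstacle here; the corollary is essentially an assembly of three prior results, with the hypotheses on $\M,\N$ being tailored precisely to trigger Proposition \ref{ORLabrousse}. The only thing to be a little careful about is to check that the closedness assumption on $\M\cap\N$ is used in the right place, namely to pass from $\ol{\M\cap\N}=\ol{\M}\cap\ol{\N}$ to the cleaner identity $\M\cap\N=\ol{\M}\cap\ol{\N}$ needed by Proposition \ref{decomPNM}.
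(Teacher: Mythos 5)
Your proposal is correct and follows the paper's own argument essentially verbatim: Proposition \ref{ORLabrousse} yields $\ol{\M}\cap\ol{\N}=\ol{\M\cap\N}=\M\cap\N$ and $(\M\cap\N)^\perp=\M^\perp+\N^\perp$, decomposability then comes from Proposition \ref{decomPNM}, and continuity from the criterion of \cite[Corollary 3.7]{Cross} quoted just before the statement. Nothing is missing.
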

\begin{proof} Since $\M+\N$ is closed, by Proposition \ref{ORLabrousse}, $\ol{\M}\cap \ol{\N}=\ol{\M\cap \N}=\M \cap \N.$ Then, by Proposition \ref{decomPNM},  $\PMN$ is decomposable. Also, by Proposition \ref{ORLabrousse}, $(\M \cap \N)^{\perp}=\M^{\perp}+\N^{\perp}$ so that $\PMN$ is continuous. 
\end{proof} 

The condition ``$\M^\bot+\N^\bot$ closed"  does not imply the continuity of $\PMN$ \cite[Example 3.10]{Cross}. However, if $\PMN$ is decomposable, this conditions is indeed sufficient.

\begin{prop} \label{PropDC} Suppose that $\PMN$ is decomposable. Then $\PMN$ is continuous if and only if $\M^{\perp}+\N^{\perp}$ is closed.
\end{prop}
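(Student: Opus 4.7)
The plan is to reduce the claim to the already-cited characterization \cite[Corollary 3.7]{Cross} that $\PMN$ is continuous if and only if $\M^{\perp}+\N^{\perp}=(\M\cap\N)^{\perp}$. Given this, the forward implication is immediate: if $\PMN$ is continuous then $\M^{\perp}+\N^{\perp}$ coincides with the closed subspace $(\M\cap\N)^{\perp}$, hence is closed. No use of decomposability is needed for this direction.

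For the reverse implication, I would exploit decomposability through Proposition \ref{propdecom}, which yields the key identity $\ol{\M\cap\N}=\ol{\M}\cap\ol{\N}$. The computation is then purely at the level of orthogonal complements: for any subspaces, $(\M^{\perp}+\N^{\perp})^{\perp}=\M^{\perp\perp}\cap\N^{\perp\perp}=\ol{\M}\cap\ol{\N}$, so taking perp once more gives
\begin{equation*}
\ol{\M^{\perp}+\N^{\perp}}=(\ol{\M}\cap\ol{\N})^{\perp}=(\ol{\M\cap\N})^{\perp}=(\M\cap\N)^{\perp},
\end{equation*}
where the middle equality uses decomposability. Hence, if $\M^{\perp}+\N^{\perp}$ is closed, it equals its closure, which equals $(\M\cap\N)^{\perp}$, and the Cross criterion delivers continuity of $\PMN$.

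There is essentially no obstacle; the only subtle point is recognizing that decomposability is needed exactly to pass from $\ol{\M}\cap\ol{\N}$ to $(\M\cap\N)^{\perp\perp}$, and that without it the reverse implication can fail (the example from \cite[Example 3.10]{Cross} cited in the paragraph just before the proposition shows that ``$\M^{\perp}+\N^{\perp}$ closed'' alone is insufficient). So the proof is just the two short paragraphs above, with a reference to Proposition \ref{propdecom} and \cite[Corollary 3.7]{Cross}.
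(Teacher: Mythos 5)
Your proof is correct, but it follows a genuinely different route from the paper's. You reduce everything to the criterion from \cite[Corollary 3.7]{Cross} (quoted in the paper just before the proposition), namely that $\PMN$ is continuous if and only if $\M^{\perp}+\N^{\perp}=(\M\cap\N)^{\perp}$, and then use only the consequence $\ol{\M\cap\N}=\ol{\M}\cap\ol{\N}$ of decomposability from Proposition \ref{propdecom} together with the elementary identities $(\M^{\perp}+\N^{\perp})^{\perp}=\ol{\M}\cap\ol{\N}$ and $(\ol{\M\cap\N})^{\perp}=(\M\cap\N)^{\perp}$ to conclude that $\ol{\M^{\perp}+\N^{\perp}}=(\M\cap\N)^{\perp}$; closedness of $\M^{\perp}+\N^{\perp}$ then forces the required equality. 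The paper instead argues through the decomposition machinery: continuity is equivalent to $(\PMN)_m$ being bounded, decomposability gives $(\PMN)_m=(\PMN)_{\reg}$ via Theorem \ref{decomthm}, and Proposition \ref{bounded3} says $(\PMN)_{\reg}$ is bounded exactly when $\dom \PMN^{*}=\M^{\perp}+\N^{\perp}$ is closed. Your argument is more elementary and self-contained at the level of subspace-lattice identities, and it makes transparent that decomposability enters only through $\ol{\M\cap\N}=\ol{\M}\cap\ol{\N}$ and only in the reverse implication; the paper's argument is shorter given the cited machinery and explains conceptually why the relevant closedness condition is that of $\dom \PMN^{*}$. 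Both are valid.
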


\begin{proof} $\PMN$ is continuous if and only if $(\PMN)_m$ is bounded. But $(\PMN)_m=(\PMN)_{\reg}$, by Theorem \ref{decomthm}; because $\PMN$ is decomposable. By Proposition \ref{bounded3},  $(\PMN)_{\reg}$ is bounded if and only if $\dom \PMN^*=\M^{\perp}+\N^{\perp}$ is closed. 
\end{proof}

\begin{prop} \label{PropDC1 }Suppose that that $\M \cap \N$ is closed. Then the following are equivalent:
\begin{enumerate}
	\item $\PMN$ is continuous;
	\item $\PMN$ is decomposable with bounded operator part;
	\item $\ol{\M} \cap \ol{\N}=\M \cap \N$ and $\M^{\perp}+\N^{\perp}$ is closed.
\end{enumerate} 
\end{prop}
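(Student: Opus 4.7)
The plan is to argue cyclically $(ii)\Rightarrow(i)\Rightarrow(iii)\Rightarrow(ii)$, harvesting the results already compiled in the preceding subsection.

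For $(ii)\Rightarrow(i)$, I would invoke directly the remark immediately following Proposition \ref{Tcont}, which says that a decomposable linear relation is continuous exactly when its operator part $T_0$ is bounded. Since hypothesis $(ii)$ asserts exactly this for $T=\PMN$, continuity is automatic.

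For $(i)\Rightarrow(iii)$, I would appeal to the Cross--Wilcox characterization cited just before Corollary \ref{corclosed}: $\PMN$ is continuous if and only if $\M^\perp+\N^\perp=(\M\cap\N)^\perp$. The right-hand side is closed, so $(i)$ immediately yields that $\M^\perp+\N^\perp$ is closed. For the other clause of $(iii)$, I would take orthogonal complements in the identity $\M^\perp+\N^\perp=(\M\cap\N)^\perp$, using $(A+B)^\perp=A^\perp\cap B^\perp$ on the left to obtain $\ol{\M}\cap\ol{\N}=((\M\cap\N)^\perp)^\perp=\M\cap\N$, where the last equality uses the standing assumption that $\M\cap\N$ is closed.

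For $(iii)\Rightarrow(ii)$, the equality $\ol{\M}\cap\ol{\N}=\M\cap\N$ is exactly condition (i) of Proposition \ref{decomPNM}, so $\PMN$ is decomposable. With decomposability in hand plus the closedness of $\M^\perp+\N^\perp=\dom\PMN^*$, Proposition \ref{PropDC} delivers continuity of $\PMN$. Finally, the same remark after Proposition \ref{Tcont} used in the first implication promotes ``decomposable and continuous'' to ``decomposable with bounded operator part'', yielding $(ii)$.

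No step presents a real obstacle; the only care needed is to make sure the closedness of $\M\cap\N$ is used at the right spot, namely when passing from $((\M\cap\N)^\perp)^\perp$ back to $\M\cap\N$ in the proof of $(i)\Rightarrow(iii)$, and when invoking Proposition \ref{decomPNM} (whose characterization is phrased precisely under this closedness).
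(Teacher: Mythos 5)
Your proof is correct, but it follows a somewhat different route from the paper's. The paper proves the two equivalences $i)\Leftrightarrow ii)$ and $ii)\Leftrightarrow iii)$ separately: the first by citing an external result of Hassi--de Snoo--Szafraniec, the second via Proposition \ref{propdecom} (to get $\ol{\M}\cap\ol{\N}=\ol{\M\cap\N}=\M\cap\N$ from decomposability) together with Propositions \ref{PropDC} and \ref{decomPNM}. You instead argue cyclically and, crucially, you replace the external citation by internal material: $ii)\Rightarrow i)$ comes from the remark after Proposition \ref{Tcont} (itself a consequence of Theorem \ref{decomthm}), and $i)\Rightarrow iii)$ is extracted directly from the Cross--Wilcox criterion $\M^\perp+\N^\perp=(\M\cap\N)^\perp$ by taking orthogonal complements, with the closedness of $\M\cap\N$ used exactly where you flag it, in $((\M\cap\N)^\perp)^\perp=\M\cap\N$. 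Your $iii)\Rightarrow ii)$ coincides with the paper's converse direction (Propositions \ref{decomPNM} and \ref{PropDC}), plus the small extra step upgrading ``decomposable and continuous'' to ``bounded operator part.'' What your approach buys is self-containedness -- everything is drawn from results already stated in the paper -- and a transparent role for the angle-type identity $\M^\perp+\N^\perp=(\M\cap\N)^\perp$; what the paper's version buys is brevity, since the equivalence of continuity with bounded operator part under the closedness hypothesis is delegated wholesale to the cited corollary. The only point worth making explicit in your write-up is that ``operator part'' in $ii)$ is read as the canonical $T_0$ of \eqref{Tdecom}, which is what the remark after Proposition \ref{Tcont} addresses; with any other bounded operator part $\tilde T$ one still gets $T_m=Q\tilde T$ bounded, so nothing breaks either way.
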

\begin{proof} $\noindent i) \Leftrightarrow ii)$: follows from \cite[Corollary 3.22]{Hassi2}.

$\noindent ii) \Leftrightarrow iii)$: if $ii)$ holds then, by Proposition \ref{propdecom}, $\ol{\M} \cap \ol{\N}=\ol{\M \cap \N}=\M\cap \N,$ and from Proposition \ref{PropDC} we get that $\M^{\perp}+\N^{\perp}$ is closed. The converse follows from Proposition \ref{decomPNM} and Proposition \ref{PropDC}.
\end{proof}

\begin{obs} For $\M$ and $\N$  closed subspaces of $\HH,$ Friedrichs  \cite{Friedrichs} defined the cosine of the \emph{angle} between $\M$ and $\N$ as 
	\begin{align*}
		c(\M,\N):=\sup \left\{\vert \! \PI{x}{y} \!\vert \! \! : x\in \M\ominus(\M \cap \N), \! y\in\N \ominus(\M \cap \N),  \|x\|\ \!, \! \|y\|\leq 1 \right\}.
	\end{align*}
By \cite[Theorem 13]{Deutsch}, $\M+\N$ is closed if and only if $c(\M,\N)<1,$ and by \cite[Lemma 11]{Deutsch}, $\M+\N$ is closed if and only if $\M^\bot+\N^\bot$ is closed. 

Therefore, if $\PMN$ is  closed, i.e., if  $\M$ and $\N$ are closed, then, by Proposition \ref{Tcont}, $\PMN$ is continuous if and only if $\M+\N$ is closed, or equivalently $c(\M,\N)<1.$ Now, if we replace the condition of $\PMN$ being closed by the less restrictive condition of $\PMN$ being decomposable then, by Proposition \ref{PropDC}, $\PMN$ is continuous if and only if $c(\ol{\M}, \ol{\N})<1.$ 
\end{obs}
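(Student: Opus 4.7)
The plan is to read the Remark as the assembly of two separate biconditional statements, each combining an earlier result from the paper with the two cited theorems of Deutsch. I would first dispatch the closed case and then the decomposable case, with the only nontrivial point being a routine reduction to closed subspaces in the second step.

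For the closed case, I would start by observing, via the identities in \eqref{L2}, that $\PMN$ is closed precisely when both $\M$ and $\N$ are closed. Proposition \ref{Tcont} then says a closed relation is continuous if and only if its domain is closed, and since $\dom \PMN = \M + \N$, continuity of $\PMN$ is equivalent to closedness of $\M + \N$. Deutsch's Theorem~13, which applies because $\M, \N$ are closed, then rewrites this as $c(\M, \N) < 1$.

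For the decomposable case, I would invoke Proposition \ref{PropDC}, which gives that continuity of $\PMN$ is equivalent to $\dom \PMN^{*} = \M^{\perp} + \N^{\perp}$ being closed. The key bookkeeping step is the elementary identity $\M^{\perp} = (\ol{\M})^{\perp}$ and $\N^{\perp} = (\ol{\N})^{\perp}$, which yields
$$\M^{\perp} + \N^{\perp} = (\ol{\M})^{\perp} + (\ol{\N})^{\perp}.$$
Since $\ol{\M}$ and $\ol{\N}$ are closed, Deutsch's Lemma~11 applies to them and equates closedness of $(\ol{\M})^{\perp} + (\ol{\N})^{\perp}$ with closedness of $\ol{\M} + \ol{\N}$, which in turn — by Deutsch's Theorem~13 applied to these closed subspaces — is equivalent to $c(\ol{\M}, \ol{\N}) < 1$. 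Chaining these equivalences finishes the argument.

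No serious obstacle is expected: both conclusions are assemblies of previously established results. The only subtle point, and the single place where one must be careful, is that the cited Deutsch results are stated for closed subspaces, so in the decomposable case one must first absorb the closures into the orthogonal complements via $\M^{\perp} = (\ol{\M})^{\perp}$ before appealing to Lemma~11 and Theorem~13; without this step one would (wrongly) try to apply the Deutsch results directly to the possibly non-closed subspaces $\M$ and $\N$.
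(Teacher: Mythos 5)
Your argument is correct and follows exactly the route the paper intends: Proposition \ref{Tcont} plus Deutsch's Theorem 13 for the closed case, and Proposition \ref{PropDC} combined with the identity $\M^{\perp}=(\ol{\M})^{\perp}$, Deutsch's Lemma 11, and Theorem 13 for the decomposable case. The one subtlety you flag --- passing to $\ol{\M}$, $\ol{\N}$ before invoking the Deutsch results --- is precisely why the remark states the decomposable criterion as $c(\ol{\M},\ol{\N})<1$ rather than $c(\M,\N)<1$, so nothing is missing.
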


\subsection{Distinguished orthogonal range decompositions of  multivalued projections}
When a multivalued projection $E$ is decomposable, $E_m=E_{\reg}=E_0$ and then, by Lemma \ref{E_0}, $E_m$ and $E_{\reg}$ are projections. More generally, we are interested in characterizing the multivalued projections whose operator parts in their Lebesgue decompositions are projections. 

\begin{lema} \label{lemaF} For a given $F \in \mc P,$ $F\PMN$ is a projection if and only if $\M \cap \N \subseteq \ker F,$
$\M+\N = F(\M)+\M\cap \ker F+\N$ and $F(\M)\subseteq \M+\N \cap \ker F.$
In this case,
$$F\PMN=P_{F(\M) \pl \N+\M \cap\ker F}.$$
\end{lema}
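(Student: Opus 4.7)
The plan is to compute $FE$ (with $E:=\PMN$) directly as a product of linear relations, read off its domain, range, kernel and multivalued part, and then apply Propositions \ref{prop2.2} and \ref{lemamul}. Since $E = I_\M \ \hat{+}\ (\N \times \{0\})$ and $F$ is (the graph of) a bounded operator, a direct unpacking yields
\[
FE = \{(m+n,\, Fm) : m \in \M,\ n \in \N\},
\]
so that $\dom FE = \M+\N$, $\ran FE = F(\M)$, $\ker FE = \M\cap\ker F + \N$, and $\mul FE = F(\M\cap\N)$.

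Next, I would split the ``projection'' requirement into two parts: $\mul FE = \{0\}$, and $FE$ is a multivalued projection. The first is equivalent to $F(\M\cap\N) = \{0\}$, i.e.\ to the first listed condition $\M\cap\N \subseteq \ker F$. For the second, I invoke Proposition \ref{lemamul}: $FE$ is a multivalued projection iff $I_{\ran FE} \subseteq FE$, that is, $(Fm,Fm)\in FE$ for every $m\in\M$. Unfolding, this means there exist $m'\in\M$ and $n'\in\N$ with $Fm = m'+n'$ and $Fm' = Fm$; applying $F$ to the first equation shows that $Fm' = Fm$ is equivalent to $Fn' = 0$, i.e.\ $n'\in\N\cap\ker F$. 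Hence multivalued-projectionhood is equivalent to $F(\M) \subseteq \M + (\N\cap\ker F)$, the third listed condition.

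When both conditions hold, $FE$ is a projection, and Proposition \ref{prop2.2} delivers
\[
FE = I_{F(\M)} \ \hat{+} \ \bigl((\N + \M\cap\ker F)\times\{0\}\bigr) = P_{F(\M),\, \N + \M\cap\ker F}.
\]
Since $FE$ is single-valued and idempotent, $\ran FE \cap \ker FE = \{0\}$, which makes the sum $F(\M) + (\N + \M\cap\ker F)$ direct and justifies the $\pl$ symbol in the final formula. The second listed condition $\M+\N = F(\M) + \M\cap\ker F + \N$ is then obtained for free by matching the two available descriptions of $\dom FE$: direct computation gives $\M+\N$, while the multivalued-projection formula gives $F(\M) + (\N + \M\cap\ker F)$.

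The main subtlety I would flag is the parsing of the third condition ``$F(\M) \subseteq \M + \N \cap \ker F$'', which must be read as $F(\M) \subseteq \M + (\N \cap \ker F)$, with $\cap$ binding more tightly than $+$; the alternative reading would force $F(\M) = \{0\}$ since $\ran F \cap \ker F = \{0\}$. Once this is settled, the only non-trivial bookkeeping point is recognising that the second listed condition is not independent but is forced by the other two through the identity $\dom T = \ran T + \ker T$ valid for every multivalued projection $T$.
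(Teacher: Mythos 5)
Your proof is correct, but the converse direction is organised differently from the paper's. Both arguments start from the same computation of $\dom$, $\ran$, $\ker$ and $\mul$ of $F\PMN$, and both get the first condition from $\mul F\PMN=F(\M\cap\N)$. From there the paper proves necessity of the third condition by an element chase ($x=Ex=Fm$ forces $n=-(I-F)m\in\N\cap\ker F$) and proves sufficiency by verifying $\ran E\cap\ker E=\{0\}$ directly and then comparing $E$ with the candidate $P_{F(\M)\pl\N+\M\cap\ker F}$ via Lemma \ref{lemalr}, which is where the second condition (equality of domains) is invoked. You instead run everything through the criterion $I_{\ran T}\subseteq T$ of Proposition \ref{lemamul}, and your unfolding of $(Fm,Fm)\in F\PMN$ is exactly right: applying $F$ to $Fm=m'+n'$ turns $Fm'=Fm$ into $n'\in\ker F$, so multivalued-projectionhood is precisely $F(\M)\subseteq\M+(\N\cap\ker F)$. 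This makes the two directions symmetric, renders the check $\ran\cap\ker=\{0\}$ automatic (it holds for any multivalued projection with trivial multivalued part), and exposes something the paper's proof does not: the second listed condition is not independent but follows from the third (indeed, writing $Fm=m'+n'$ with $n'\in\N\cap\ker F$ gives $m-m'\in\M\cap\ker F$ and hence $m=(m-m')+Fm-n'\in \M\cap\ker F+F(\M)+\N$), or, as you note, from $\dom T=\ran T+\ker T$ once projectionhood is established. Your parsing of the third condition as $\M+(\N\cap\ker F)$ agrees with the paper's own usage in its proof. The trade-off is that the paper's route produces the explicit formula $P_{F(\M)\pl\N+\M\cap\ker F}$ as part of the sufficiency argument, whereas you obtain it afterwards from Proposition \ref{prop2.2}; both are legitimate.
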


\begin{proof} Set $E:=F\PMN.$ It is easy to check that $\dom E=\M+\N,$ $\ran E=F(\M),$ $\ker E=\N+\M \cap \ker F$ and $\mul E=F(\M \cap \N).$ 
	
Suppose that $E$ is a projection. Then $\mul E=\{0\},$ or $\M \cap \N \subseteq \ker F.$ Also, $\dom E=\ran E+\ker E$ so that $\M+\N = F(\M) \dotplus (\M\cap \ker F+\N).$ Finally, if $x \in F(\M),$ write  $x=m+n$ with $m\in \M$ and $n \in \N.$ Then $x=Ex=F\PMN(m+n)=Fm.$  So that $m+n=Fm$ and $n=-(I-F)m \in \ker F,$ or  $x \in \M+\N \cap \ker F.$

Conversely, since $\M \cap \N \subseteq \ker F,$ $E$ is an operator. Let us see that $\ran E \cap \ker E=\{0\}.$ Let $m \in \M$ and suppose that $Fm=n+u,$ $n \in \N$ and $u\in \M \cap \ker F.$ Then $n=Fm-u \in F(\M)+\M \cap \ker F\subseteq \M + \N \cap \ker F.$ Then $n=m'+v,$ $m' \in \M$ and $v \in \N \cap \ker F.$ So that $m'=n-v\in \M \cap \N \subseteq \ker F.$ Hence $Fm=m'+v+u \in \ker F$ and $Fm=0.$ Then we can consider $P_{F(\M) \pl \N+\M \cap\ker F}.$ By assumption, $\dom E= \M+\N= F(\M)+\M\cap \ker F+\N=\dom P_{F(\M) \pl \N+\M \cap\ker F}.$ Finally, take $x \in F(\M)$  and write $x=m+n$ with $m\in \M$ and $n \in \N \cap  \ker F.$ Then $x=Fx=Fm=F\PMN (m+n)=F\PMN x$ because $(x,m)=(m+n,m)\in \PMN$ and $(m,Fm) \in F.$ Therefore $P_{F(\M) \pl \N+\M \cap\ker F} \subseteq E$ and equality follows. 
\end{proof}

 Recall that for $T:=\PMN$, $T_{\reg}=PT$ and $T_m=QT$ where $P=P_{\overline{\M^{\perp}+\N^{\perp}}}$ and $Q=P_{(\M \cap \N)^{\perp}}.$
\begin{cor} \label{propreg} The following statements hold:
\begin{enumerate}
	\item[1.] $(\PMN)_{\reg}$ is a projection if and only if $\M+\N=P(\M)+\M\cap \ol{\N}+\N$ and $P(\M) \subseteq \M + \N \cap \ol{\M}.$ 
	\item[2.] $(\PMN)_{m}$ is a projection if and only if $\M+\N=Q(\M)+\M\cap \ol{\M \cap \N}+\N$ and $Q(\M) \subseteq \M +\N \cap  \ol{\M \cap \N}.$
\end{enumerate}

\end{cor}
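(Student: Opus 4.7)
The plan is to apply Lemma \ref{lemaF} twice, once with $F=P=P_{\overline{\M^{\perp}+\N^{\perp}}}$ to obtain part $1.$ and once with $F=Q=P_{(\M\cap \N)^{\perp}}$ to obtain part $2.$ The main task is therefore to identify $\ker F$ in each case and simplify the three conditions of Lemma \ref{lemaF} using that $\M\subseteq \ol{\M}$ and $\N\subseteq \ol{\N}$.

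\textbf{Part 1.} First I would compute $\ker P=(\overline{\M^{\perp}+\N^{\perp}})^{\perp}=(\M^{\perp}+\N^{\perp})^{\perp}=\ol{\M}\cap \ol{\N}$. Then the three conditions in Lemma \ref{lemaF} become: $(a)$ $\M\cap \N\subseteq \ol{\M}\cap \ol{\N}$, which is automatic; $(b)$ $\M+\N=P(\M)+\M\cap (\ol{\M}\cap \ol{\N})+\N$, which, using $\M\cap \ol{\M}=\M$, simplifies to the stated identity $\M+\N=P(\M)+\M\cap \ol{\N}+\N$; and $(c)$ $P(\M)\subseteq \M+\N\cap (\ol{\M}\cap \ol{\N})$, which, using $\N\cap \ol{\N}=\N$, reduces to $P(\M)\subseteq \M+\N\cap \ol{\M}$.

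\textbf{Part 2.} Analogously, I would compute $\ker Q=((\M\cap \N)^{\perp})^{\perp}=\ol{\M\cap \N}$. Condition $(a)$ of Lemma \ref{lemaF} reads $\M\cap \N\subseteq \ol{\M\cap \N}$, again automatic. Conditions $(b)$ and $(c)$ translate directly into the two identities in the statement, with no further simplification necessary since $\ker Q$ is already written in its simplest form.

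\textbf{Expected obstacle.} There is no substantial obstacle; the argument is a direct specialization of Lemma \ref{lemaF}. The only point that requires a moment of care is the simplification $\M\cap (\ol{\M}\cap \ol{\N})=\M\cap \ol{\N}$ and $\N\cap (\ol{\M}\cap \ol{\N})=\N\cap \ol{\M}$ used in part $1.$, which follow from the trivial inclusions $\M\subseteq \ol{\M}$ and $\N\subseteq \ol{\N}$.
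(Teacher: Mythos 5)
Your proposal is correct and is essentially identical to the paper's own proof: both identify $\ker P=\ol{\M}\cap\ol{\N}$ and $\ker Q=\ol{\M\cap\N}$, observe that the condition $\M\cap\N\subseteq\ker F$ holds automatically in each case, and then specialize Lemma \ref{lemaF}. The paper states this more tersely, but your added simplifications $\M\cap(\ol{\M}\cap\ol{\N})=\M\cap\ol{\N}$ and $\N\cap(\ol{\M}\cap\ol{\N})=\N\cap\ol{\M}$ are exactly what is implicitly used there.
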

\begin{proof} It holds that $\M \cap \N \subseteq \ker P=\ol{\M} \cap \ol{\N}$  and $\M \cap \N \subseteq \ker Q=\ol{\M\cap \N}$ then the results in $1$ and $2$ follow from Lemma \ref{lemaF}.
\end{proof}

	\section{Semiclosed multivalued projections} \label{sec:five}
	
	The definitions of \emph{semiclosed subspace} and \emph{semiclosed operator} were formally introduced by Kaufman \cite{Kaufman}, though these notions were considered by other authors before.
	
	\begin{Def}
		A subspace $\St$ of $\HH$ is \emph{semiclosed} if there exists an inner product $\PI{\cdot}{\cdot}^{'}$ such that $(\St,\PI{\cdot}{\cdot}^{'})$ is a Hilbert space which is \emph{continuously imbedded} in $\HH,$ i.e., there exists $b > 0$ such that $\PI{x}{x} \leq b \PI{x}{x}'$ for every $x \in \St.$ 
	\end{Def}

	A subspace $\St$ is semiclosed if and only if $\St$ is an operator range: in fact, if $T \in L(\HH)$ define
	$$\PI{u}{v}_T :=\PI{T^{\dagger} u}{T^{\dagger} v}\mbox{ for } u, v \in \ran(T),$$ where $T^{\dagger}$ denotes the (possibly unbounded) Moore-Penrose inverse of $T$ \cite{Nashed}, and let $\Vert\cdot\Vert_T$ be the induced norm. Then $(\ran T, \PI{\cdot}{\cdot}_T)$ is a Hilbert space and
	\begin{equation} \label{eqT}
		\Vert u \Vert = \Vert TT^{\dagger} u \Vert \leq \Vert T \Vert \Vert T^{\dagger} u \Vert =\Vert T \Vert \Vert  u \Vert_T, \mbox{  for } u \in \ran(T).
	\end{equation}
	Therefore, $\ran T$ is semiclosed. We write
	$$\mc{M}(T) := (\ran T, \PI{\cdot}{\cdot}_T)$$ to denote the space $\ran T$ equipped with the Hilbert space structure $\PI{\cdot}{\cdot}_T.$
	
Conversely, if $\St$ is a semiclosed subspace of $\HH,$ then there is a unique positive (semi-definite) operator $T \in L(\HH)$ such that $(\St, \PI{\cdot}{\cdot}_T)=\M(T),$ see  \cite[Corollary 3.3]{AndoSC} and \cite[Theorem 1.1]{Filmore}. In what follows, we use the terms operator range and semiclosed interchangeable.

\begin{thm}[{\cite[Corollary 3.8]{AndoSC}}] \label{thmAndo} For $T_1, T_2 \in L(\HH),$ let $T:=(T_1{T_1}^*+T_2{T_2}^*)^{1/2}.$ Then $\Vert u_1 + u_2 \Vert_T^2 \leq \Vert u_1 \Vert_{T_1}^2 + \Vert u_2 \Vert_{T_2}^2,$ for $u_1 \in \ran T_1$ and $u_2 \in \ran T_2,$ and for any $u \in \ran T,$ there are unique  $u_1 \in \ran T_1$ and $u_2 \in \ran T_2$ such that $u=u_1+u_2$ and $$ \Vert u_1 + u_2 \Vert_T^2 = \Vert u_1 \Vert_{T_1}^2 + \Vert u_2 \Vert_{T_2}^2.$$
\end{thm}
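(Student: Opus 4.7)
The plan is to reduce the statement to a single–operator range identity by introducing an auxiliary bounded operator on $\HH \oplus \HH$ whose range Hilbert space is already known to coincide with $\M(T)$.

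Specifically, I would define $R : \HH \oplus \HH \to \HH$ by $R(x_1, x_2) := T_1 x_1 + T_2 x_2$. A direct computation gives $R R^* = T_1 T_1^* + T_2 T_2^* = T T^*$, so by the uniqueness part of the Douglas/Fillmore--Williams principle ($AA^* = BB^*$ implies $\M(A) = \M(B)$ isometrically), one has $\M(R) = \M(T)$ as Hilbert spaces continuously imbedded in $\HH$. In particular $\ran T = \ran T_1 + \ran T_2$, and the norm on $\M(T)$ admits the Moore--Penrose variational description
$$\|u\|_T^2 \;=\; \|u\|_R^2 \;=\; \inf \bigl\{ \|x_1\|^2 + \|x_2\|^2 : u = T_1 x_1 + T_2 x_2,\ x_1, x_2 \in \HH\bigr\},$$
the infimum being attained at the unique pair $(x_1^\ast, x_2^\ast) = R^{\dagger} u \in (\ker R)^{\perp}$.

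The inequality then falls out by inserting the Moore--Penrose representatives in each summand: for any decomposition $u = u_1 + u_2$ with $u_i \in \ran T_i$, the pair $y_i := T_i^{\dagger} u_i$ satisfies $T_i y_i = u_i$ and $\|y_i\| = \|u_i\|_{T_i}$, so the variational formula gives $\|u\|_T^2 \leq \|y_1\|^2 + \|y_2\|^2 = \|u_1\|_{T_1}^2 + \|u_2\|_{T_2}^2$. For the existence of an optimal pair I would set $u_i^\ast := T_i x_i^\ast$ and exploit the inclusion $\ker T_1 \oplus \ker T_2 \subseteq \ker R$, which yields $(x_1^\ast, x_2^\ast) \in (\ker T_1)^\perp \oplus (\ker T_2)^\perp$; thus $\|x_i^\ast\| = \|u_i^\ast\|_{T_i}$ and summing produces the desired equality. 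Uniqueness of $(u_1^\ast, u_2^\ast)$ follows because any other pair attaining equality would, via the $T_i^{\dagger}$, pull back to another minimizer of the strictly convex quadratic problem on the affine set $\{(x_1, x_2) : R(x_1, x_2) = u\}$, contradicting the uniqueness of $R^\dagger u$.

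The main obstacle I expect is justifying the identification $\M(R) = \M(T)$ cleanly and pinning down the variational formula for $\|\cdot\|_R$ on the composite domain; once that is done, everything else is bookkeeping of Moore--Penrose inverses together with the inclusion $\ker T_1 \oplus \ker T_2 \subseteq \ker R$, which is precisely what aligns the two minimization problems (over pairs $(x_1, x_2) \in \HH \oplus \HH$ for $\|u\|_R^2$, and over admissible splittings $(u_1, u_2)$ for the right-hand side of the theorem).
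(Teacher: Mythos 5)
Your proof is correct, but note that the paper does not prove this statement at all: it is imported verbatim as \cite[Corollary 3.8]{AndoSC}, so there is no internal argument to compare against. Your reconstruction via the row operator $R(x_1,x_2)=T_1x_1+T_2x_2$ on $\HH\oplus\HH$, the isometric identification $\M(R)=\M\bigl((RR^*)^{1/2}\bigr)=\M(T)$, and the minimal-norm (Moore--Penrose) variational description of $\Vert\cdot\Vert_T$ is sound at every step --- in particular the inclusion $(\ker R)^{\perp}\subseteq(\ker T_1)^{\perp}\oplus(\ker T_2)^{\perp}$ does give $\Vert x_i^{\ast}\Vert=\Vert T_ix_i^{\ast}\Vert_{T_i}$, and uniqueness of the minimal-norm preimage in the closed affine set $R^{-1}(u)$ does force uniqueness of the optimal pair $(u_1,u_2)$. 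This is essentially Ando's own argument for the complementation theorem, so the proposal supplies a legitimate self-contained proof of the cited result.
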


The Hilbert space $\mc{M}(T)$ plays a significant role in many areas, in particular in the de Branges complementation theory, see \cite{AndoSC, Bolotnikov}.

	\begin{Example} \label{Example3} If $T$ is a contraction, the inclusion map $\iota: \mc{M}(T) \ra \HH$ is contractive, since $\Vert \iota x \Vert \leq \Vert x \Vert_T$ for every $x \in \ran T.$ Then, we say that $\St:=\M(T)$ is \emph{contractively included} in $\HH.$ In this case, the \emph{de Branges-Rovnyak complement} of $\St$ is defined by 
	$$\St':=\M((I-TT^*)^{1/2}).$$
In fact,	$\St+\St'=\ran ((TT^*+(I-TT^*))^{1/2})=\HH$ and the \emph{overlapping space} $\St \cap \St'$ measures the extent to which this complementary space fails to be a true orthogonal complement, see \cite[Proposition 3.4]{Bolotnikov}. 

In this case, $P_{\St, \St'}$ defines a multivalued projection with $\dom P_{\St, \St'}=\St+\St'=\HH,$ and $\mul P_{\St, \St'}=\St \cap \St'$. Moreover, $P_{\St, \St'}$ is a contraction, in the sense that $\Vert  P_{\St, \St'}\Vert \leq 1.$
	In fact,  since $I=(T_1{T_1}^*+T_2{T_2}^*)^{1/2}$ for $T_1:=T$ and $T_2:=(I-TT^*)^{1/2},$ by Theorem \ref{thmAndo}, given any $x \in \HH,$ there are unique $f \in \St$ and $g\in \St'$ such that $x=f+g$ and $$\Vert x \Vert^2=\Vert f \Vert_{T_1}^2+\Vert g \Vert_{T_2}^2.$$ 
	If $Q:=P_{(\St \cap\St')^{\perp}}$ then $(x,Qf) \in  QP_{\St, \St'}.$ Hence
	\begin{align*}
		\Vert (QP_{\St, \St'})x\Vert^2&=\Vert Qf\Vert^2 \leq \Vert f \Vert^2 \leq \Vert f \Vert_{T_1}^2 \leq \Vert f \Vert_{T_1}^2+\Vert g \Vert_{T_2}^2=\Vert x \Vert^2.
	\end{align*}	So that $\Vert P_{\St, \St'} \Vert=	\Vert QP_{\St, \St'} \Vert \leq 1.$
\end{Example}

	In \cite{Hassi} operator range linear relations are considered. See also \cite{Kaufman,Koliha}.

	\begin{Def} A linear relation $T \in \lr(\mc E,\KK)$ is an \emph{operator range} (or a \emph{semiclosed relation}) if it is a semiclosed subspace of $\mc E \times \KK,$ that is, $T=\ran \Phi$ for some $\Phi \in L(\mc H, \mc E \times \KK)$ where $\mc H$ is a Hilbert space.
	\end{Def}
Given $A \in L( \mc E, \mathcal H)$ and $B \in L(\mathcal K, \HH),$ consider the row operator $$\begin{bmatrix}
	A & B
\end{bmatrix} \in L(\mc E \times \KK, \mc H), \  \begin{bmatrix} A & B
\end{bmatrix} \begin{bmatrix}
	h \\ k
\end{bmatrix}=Ah+Bk \mbox{ for } h \in \mc E, k \in \mc K;$$ and 
for a pair of operators $C \in L(\HH, \mathcal E)$ and $D \in L(\mathcal H, \KK),$ consider the column operator
$$\begin{bmatrix} C \\ D
\end{bmatrix}\in L(\mathcal H,\mathcal E\times \KK ), \  \begin{bmatrix} C \\ D
\end{bmatrix}x=\begin{bmatrix} Cx \\ Dx
\end{bmatrix} \mbox{ for } x \in \mc H.$$ 

Set \begin{equation} \label{Gamma}
	\G:=\left( \begin{bmatrix}
		A & B
	\end{bmatrix} \begin{bmatrix}
	A & B
\end{bmatrix}^*\right)^{1/2}=(AA^*+BB^*)^{1/2}.
\end{equation}  
So $\G \in L(\HH)$ and  $\ran \G=\ran \begin{bmatrix} A & B
\end{bmatrix}=\ran A+\ran B.$ 
By Douglas' Lemma \cite{Douglas}, there exist (unique) contractions $C_A \in L(\mc E, \HH)$  and $C_B \in L(\KK, \mc H)$ such that 
\begin{equation} \label{Douglas1}
	A=\G C_A \ \ \mbox{ and } \ \ B=\G C_B,
\end{equation}
and $\ran C_A, \ran C_B \subseteq \clran \G.$ Moreover, the identity 
\begin{equation} \label{DPolar}
	\begin{bmatrix} A & B
	\end{bmatrix} =\G \begin{bmatrix} C_A & C_B
	\end{bmatrix} 
\end{equation}
is the  (left) polar decomposition of $\begin{bmatrix} A & B
\end{bmatrix},$ where the row operator  $\begin{bmatrix} C_A & C_B
\end{bmatrix}$ is the (unique) partial isometry with
\begin{equation} \label{pisometry}
	\ran \begin{bmatrix} C_A & C_B
	\end{bmatrix} = \clran \G
	\ \textrm{and} \
	\ker\begin{bmatrix} C_A & C_B
	\end{bmatrix}=\ker\begin{bmatrix} A & B
	\end{bmatrix},
\end{equation} 
\cite[Lemma 4.2]{Hassi}. In particular,
\begin{equation}\label{P_G}
	P_{\clran \G}=C_A^{\phantom{*}}C_A^*+C_B^{\phantom{*}}C_B^*,
\end{equation}
and 
\begin{equation}\label{P_G2}
	\G=AC_A^*+BC_B^*.
\end{equation}

Following the notation used in \cite{Hassi} write
$$L(C,D):= \ran  \begin{bmatrix} C \\ D
\end{bmatrix}=\{(Cx,Dx) : x \in \mathcal H\}.$$
Thus, $L(C,D)$ is a semiclosed relation. Since \begin{equation} \label{cociente}
	L(C,D)=DC^{-1},
\end{equation} 
in the sense of the product of linear relations, $L(C,D)$ is a \emph{quotient}.
In fact, any semiclosed linear relation is a quotient as (\ref{cociente}), that is, if $T\in\lr(\mc E,\KK)$ is semiclosed then  there exist $C\in L(\mathcal H, \mc E)$ and $D\in L(\mathcal H, \KK)$ such that $T=L(C,D)$ \cite[Theorem 1.10.1]{BHS}. It is straightforward that $\dom L(C,D)=\ran C,$  $\ran L(C,D)=\ran D$, $\ker L(C,D)=C(\ker D)$ and $\mul L(C,D)= D(\ker C).$ Hence, if $T$ is an operator range relation then $\ran T, \dom T, \ker T$ and $\mul T$ are semiclosed subspaces, \cite[Corollary 2.9]{Hassi}.

A projection $E$ is an operator range operator if and only if $\ran E$ and $\ker E$ are operator ranges, \cite[Proposition 3.2]{Semiclosed}. More generally,

\begin{prop} \label{propSC1} $E \in \Mp(\HH)$ is an operator range relation if and only if $\ran E$ and $\ker E$ are semiclosed subspaces.
\end{prop}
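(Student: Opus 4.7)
The plan is to prove the two implications separately, with both being rather direct given the groundwork already laid in the excerpt.

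For the forward direction ($\Rightarrow$), I would invoke the statement \cite[Corollary 2.9]{Hassi} referenced in the paragraph preceding the proposition: if a linear relation $T \in \lr(\HH)$ is an operator range, then the four associated subspaces $\ran T$, $\dom T$, $\ker T$, and $\mul T$ are all semiclosed subspaces of $\HH$. Applied to $E$, this immediately yields that $\ran E$ and $\ker E$ are semiclosed, with no further argument needed.

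For the converse ($\Leftarrow$), I would use the canonical description of $E$ from Proposition \ref{prop2.2}: setting $\M := \ran E$ and $\N := \ker E$, we have
$$
E \;=\; I_{\M} \ \hat{+} \ (\N \times \{0\}) \;=\; \{(m+n,\,m) : m\in\M,\; n\in\N\}.
$$
Since $\M$ and $\N$ are semiclosed, choose $A, B \in L(\HH)$ with $\ran A = \M$ and $\ran B = \N$, and define the bounded block operator
$$
\Phi := \begin{bmatrix} A & B \\ A & 0 \end{bmatrix} \in L(\HH\times\HH).
$$
Then $\ran \Phi = \{(Ax+By,\,Ax) : x,y \in \HH\} = \{(m+n,\,m) : m\in\M, n\in\N\} = E$, so $E$ is the range of a bounded operator on $\HH\times\HH$ and is therefore a semiclosed relation.

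There is no genuine obstacle: the forward direction is a direct citation, while the converse reduces to writing down the right block operator thanks to the explicit form of a multivalued projection given by its range and kernel. If anything, the only point worth emphasizing is that the representation $E = I_{\M} \ \hat{+} \ (\N \times \{0\})$ turns the problem of producing a bounded operator with range $E$ into the standard fact that the sum of two operator ranges is an operator range, realized here concretely by the matrix $\Phi$ above.
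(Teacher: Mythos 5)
Your proposal is correct and follows essentially the same route as the paper: the forward direction is the citation of \cite[Corollary 2.9]{Hassi}, and for the converse your block operator $\Phi$ is exactly the paper's representation $E=L\left(\begin{bmatrix} A & B\end{bmatrix},\begin{bmatrix} A & 0\end{bmatrix}\right)$ written as a $2\times 2$ matrix. Nothing is missing.
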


\begin{proof} Let  $E \in \Mp(\HH)$ and suppose that $\ran E= \ran A$ and $\ker E=\ran B,$ for $A, B \in L(\HH).$ Then 
$E=L\left( \begin{bmatrix} A & B
\end{bmatrix}, \begin{bmatrix}
A & 0
\end{bmatrix}\right)$, i.e., $E$ is an operator range relation.
The converse follows from the discussion above.
\end{proof}

\begin{Example} \label{Example4} Given $W\in L(\HH)$ positive (semi-definite) consider the semi-inner product
$\PI{x}{y}_W:=\PI{Wx}{y} \mbox{ for } x, y \in \HH$ and  the semi-norm
$\Vert x \Vert_W:=\PI{x}{x}_W^{1/2}=\Vert W^{1/2} x\Vert, \ x \in \HH.$ Given a subspace $\St \subseteq \HH,$ the $W$-\emph{orthogonal companion} of $\St$ is the (closed) subspace
$\St^{\perp_W}:=\{x \in \HH: \PI{x}{s}_W=0 \mbox{ for every } s \in 
\St\}.$
	
Let $A\in L(\HH)$ and consider the multivalued projection 
$$P_{W,\ran A}:=P_{\ran A, \ \ran A^{\perp_W}}.$$ Notice that $P_{W,\ran A}$ is semiclosed since both the range and the kernel are semiclosed subspaces. This multivalued projection plays a fundamental role 
in the study of some approximation problems, see \cite{Matrix}. For instance, given  $b\in\HH$ a vector $x_0\in\HH$ is a $W$-\emph{least squares solution} ($W$-LSS) of $Ax=b$ if 
\begin{equation}\label{ALSS}
||Ax_0-b||_{W}= \underset{y \in \ran A}{\min} \ \ ||y-b||_{W}.
\end{equation}
In \cite[Propositions 5.1 and 5.3]{Matrix}, it is proven that there is a solution of \eqref{ALSS} if and only if $b \in \dom P_{W, \ran A}$, and  $A^{-1}P_{W,\ran A}b$ is the set of $W$-LSS of $Ax=b$.

\end{Example}

\begin{prop} \label{Ando3} Given  $A, B \in L(\HH),$ consider $\M=\ran A$ and $\N=\ran B$ and $C_A, C_B$ and $ \G$ as in (\ref{Douglas1}) and (\ref{Gamma}). Then 
\begin{equation}
				\PMN=\G C_A^{\phantom{*}}C_A^*\G^{-1} \ \hat{\dotplus} \ (\{0\} \times (\M \cap \N)),
			\end{equation}
		where $\G C_A^{\phantom{*}}C_A^*\G^{-1}$ is an operator part of $\PMN.$
		\end{prop}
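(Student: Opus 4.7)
The plan is to show that $F := \G C_A C_A^* \G^{-1}$, despite involving the multivalued relation $\G^{-1}$, is in fact an operator with domain $\M + \N$, and then to verify that $F \ \hat{\dotplus} \ (\{0\} \times (\M \cap \N)) = \PMN$. The first step is purely algebraic: multiplying the factorization $A = \G C_A$ from \eqref{Douglas1} on the right by $C_A^*$ gives $\G C_A C_A^* = A C_A^*$, so $F = A C_A^* \G^{-1}$.

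Next I would verify that $F$ is single-valued. Since Douglas' lemma gives $\ran C_A \subseteq \clran \G = (\ker \G)^\perp$, we have $\ker \G \subseteq \ker C_A^*$. Hence any two preimages of $x \in \ran \G$ under $\G$ differ by an element of $\ker \G$ on which $A C_A^*$ vanishes, so $Fx := A C_A^* \xi$ is well defined for any $\xi$ with $\G \xi = x$. This makes $F$ an operator with $\dom F = \ran \G = \M + \N = \dom \PMN$.

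To see $F \subseteq \PMN$, take $x = A\alpha + B\beta \in \M + \N$, set $\xi := C_A \alpha + C_B \beta$, and note that $\G \xi = x$ by \eqref{Douglas1}. Using \eqref{P_G2}, namely $\G = A C_A^* + B C_B^*$, one computes $F x = A C_A^* \xi = (\G - B C_B^*)\xi = x - B C_B^* \xi$, so $F x \in \ran A = \M$ while $x - F x = B C_B^* \xi \in \ran B = \N$. Hence $(x, Fx) \in \PMN$ by Proposition \ref{prop2.2}.

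Finally, the sum $F \ \hat{+} \ (\{0\} \times (\M \cap \N))$ is direct since $F(0) = A C_A^*(\ker \G) = \{0\}$ forces the intersection to be trivial; it is contained in $\PMN$ and has the same domain $\M + \N$ and multi-valued part $\M \cap \N$ as $\PMN$, so Lemma \ref{lemalr} yields equality and $F$ is identified as an operator part. The main subtlety is recognizing that the seemingly multivalued composition $\G C_A C_A^* \G^{-1}$ collapses to a single-valued operator thanks to the inclusion $\ran C_A \subseteq \clran \G$; the rest is routine bookkeeping with the polar factors of $\begin{bmatrix} A & B \end{bmatrix}$.
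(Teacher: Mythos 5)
Your proposal is correct and follows essentially the same route as the paper: both hinge on the identity $\G=AC_A^{*}+BC_B^{*}$ from \eqref{P_G2} together with the observation that $\ker\G\subseteq\ker C_A^{*}$ makes $AC_A^{*}\G^{-1}$ single-valued, and both conclude by matching domain and multivalued part. The only difference is presentational: the paper first computes the relation product $\PMN\G=AC_A^{*}\ \hat{\dotplus}\ (\{0\}\times(\M\cap\N))$ and then composes with $\G^{-1}$, whereas you verify the inclusion $F\subseteq\PMN$ pointwise and invoke Lemma \ref{lemalr} directly.
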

		
		\begin{proof} From \eqref{P_G2} we get that
		\begin{equation} \label{eqlema43}
		\PMN \G=\PMN(AC_A^*+BC_B^*)=AC_A^* \ \hat{\dotplus} \ (\{0\} \times (\M \cap \N)).
		\end{equation}
	In fact, $(x,y) \in \PMN \G$ if and only if $(\G x,y) =(AC_A^*x+BC_B^*x,y) \in \PMN$ if and only if $y=AC_A^*x+w$ for some $w \in \M \cap \N,$ if and only if $(x,y) \in AC_A^* \ \hat{\dotplus} \ (\{0\} \times (\M \cap \N)).$
	Then $\PMN=\PMN \G \G^{-1}= AC_A^*\G^{-1}  \hat{+} (\{0\} \times (\M \cap \N)).$ Finally, $\mul (\G C_A^{\phantom{*}}C_A^*\G^{-1})=\G C_A^{\phantom{*}}C_A^*(\ker \G)=\{0\}$ and then the sum is direct. 
\end{proof}

As a corollary, we get the following formula, similar to the one in \cite[Theorem 2.2]{Ando} and
 \cite[Proposition 3.3]{Semiclosed} for operators. 
 
\begin{cor} \label{propSC12} Given $A, B \in L(\HH), $ consider $\M=\ran A$ and $\N=\ran B.$ Then
	$$\PMN=(\G^{-1}AA^*)^*\G^{-1} \ \hat{\dotplus} \ (\{0\} \times \M \cap \N),$$ where  $\G$ is as in \eqref{Gamma}. 
\end{cor}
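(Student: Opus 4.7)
The plan is to reduce Corollary \ref{propSC12} to Proposition \ref{Ando3} by identifying the operator part $\G C_A C_A^* \G^{-1}$ appearing there with $(\G^{-1}AA^*)^*\G^{-1}$. Since the multivalued summand $\{0\}\times(\M\cap\N)$ is common to both statements, everything reduces to the operator-level identity
$$\G C_A C_A^*\G^{-1} \;=\; (\G^{-1}AA^*)^*\G^{-1}.$$

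My first step would be to unfold $\G^{-1}AA^*$ via the factorization $A=\G C_A$ from \eqref{Douglas1}, which yields $AA^*=\G C_A C_A^*\G$, and then apply the formula $\G^{-1}\G=I_\HH\ \hat{+}\ (\{0\}\times\ker\G)$ from \eqref{TT1}. A brief direct computation then identifies $\G^{-1}AA^*$ as the bounded operator $C_A C_A^*\G$ augmented by the purely multivalued piece $\{0\}\times\ker\G$; in particular $\mul(\G^{-1}AA^*)=\ker\G$.

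Next, I would take the adjoint directly from the definition. Setting $f=0$ in the condition $\PI{g}{u}=\PI{f}{v}$ forces $u\in(\ker\G)^\perp=\clran\G$, while varying $f$ gives $v=\G C_A C_A^* u$. Hence $(\G^{-1}AA^*)^*$ is the operator $u\mapsto \G C_A C_A^* u$ with domain $\clran\G$. The final step is to compose this with $\G^{-1}$ and verify the result coincides with $\G C_A C_A^*\G^{-1}$: for any $x\in\ran\G$ both relations produce the single value $\G C_A C_A^* z$, where $z$ is any pre-image of $x$ under $\G$, because the inclusion $\ran C_A\subseteq\clran\G$ forces $\ker\G\subseteq\ker C_A^*$, so that $C_A^*$ annihilates the $\ker\G$-ambiguity in the choice of $z$.

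The main obstacle I anticipate is the adjoint step: the convenient formula $(RS)^*=S^*R^*$ does not transfer cleanly when $R$ is the multivalued relation $\G^{-1}$, so I would compute $(\G^{-1}AA^*)^*$ by hand from the definition, exploiting the clean splitting of $\G^{-1}AA^*$ as a bounded operator plus a purely multivalued summand. With that identification in place, plugging into Proposition \ref{Ando3} gives the stated formula.
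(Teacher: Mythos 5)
Your proposal is correct and follows essentially the same route as the paper: both reduce to Proposition \ref{Ando3}, compute $\G^{-1}AA^*=C_A^{\phantom{*}}C_A^*\G \ \hat{\dotplus}\ (\{0\}\times\ker\G)$, identify its adjoint with $\G C_A^{\phantom{*}}C_A^*|_{\clran\G}$, and then check that composing with $\G^{-1}$ removes the effect of the restriction because $\ker\G\subseteq\ker C_A^*$. The only cosmetic difference is that you compute the adjoint directly from the definition where the paper invokes the adjoint of a componentwise sum, which amounts to the same calculation.
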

\begin{proof} By Proposition \ref{Ando3},
	\begin{align*}
		\PMN&=\G C_A^{\phantom{*}}C_A^*|_{\clran \G}\G^{-1} \ \hat{\dotplus} \ (\{0\} \times \M \cap \N),
	\end{align*}
	where we use that $\G C_A^{\phantom{*}}C_A^* \G^{-1}= AC_A^*|_{\clran \G}\G^{-1}$ because  $AC_A^*|_{\clran \G}\G^{-1}\subseteq AC_A^*\G^{-1},$ $\dom (AC_A^*|_{\clran \G}\G^{-1})= \dom (AC_A^*\G^{-1})=\M+\N$ and $\mul (AC_A^*\G^{-1})=AC_A^*(\ker \G)=\{0\}.$ But $\G C_A^{\phantom{*}}C_A^*|_{\clran \G}=(\G^{-1}AA^*)^*.$ In fact, from $AA^*=\G C_A^{\phantom{*}}C_A^*\G$ we get that $\G^{-1}AA^*=(I \ \hat{+} \ (\{0\} \times \ker \G))C_A^{\phantom{*}}C_A^*\G.$ Then
$(\G^{-1}AA^*)^*=(C_A^{\phantom{*}}C_A^*\G \ \hat{\dotplus} \ (\{0\} \times \ker \G))^*=\G C_A^{\phantom{*}}C_A^* \cap (\clran \G \times \HH)=\G C_A^{\phantom{*}}C_A^*|_{\clran \G}.$
\end{proof}

In \cite[Theorem 2.3]{Ando}, Ando proved that if $P_{\M \pl \N}$ is a  densely defined closed projection and $\G := (P_\M + P_\N)^{1/2}$ then the operator $P_0:=\G^{-1}P_{\M \pl \N}\G$ is well defined and it is an orthogonal projection. A bounded operator which is injective with dense range is called a \emph{quasi-affinity}. An operator $T$ is quasi-affine to $C$ if there is a quasi-affinity $X$ such that $TX = XC.$ 
In these terms, $P_{\M \pl \N}$ is quasi-affine to $P_0$, or $$P_{\M \pl \N}\G=\G P_0.$$ Analogous results can be obtained for semiclosed multivalued projections. 

\begin{lema}\label{D}
	Let $\G\in \lr(\HH)$ such that $\ran \G=\M+\N$ and $\mul\G\subseteq \N.$ Then
	$$\G^{-1}\PMN\G=P_{\G^{-1}(\M), \G^{-1}(\N)}.$$
\end{lema}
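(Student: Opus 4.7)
The plan is to verify that the relation $R := \G^{-1}\PMN\G$ is a multivalued projection with range $\G^{-1}(\M)$ and kernel $\G^{-1}(\N)$; the claimed identity then follows from Proposition \ref{prop2.2}. Unwinding the composition of relations via the formula $\PMN = I_\M \ \hat{+}\ (\N \times \{0\})$, one obtains the workable description: $(x,y) \in R$ if and only if there exist $z, w \in \HH$ with $(x,z), (y,w) \in \G$, $w \in \M$, and $z - w \in \N$.

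Using this description, I check the following in turn. First, $\ran R = \G^{-1}(\M)$: one inclusion is immediate from $(y,w) \in \G$ with $w \in \M$, while for the reverse inclusion, given $y \in \G^{-1}(\M)$ and $w \in \M \cap \G(y)$, the hypothesis $\ran \G = \M + \N \supseteq \M$ produces some $x$ with $(x,w) \in \G$, and then $z := w$ witnesses $(x,y) \in R$. Second, $\ker R = \G^{-1}(\N)$: if $(x,0) \in R$ then $w \in \M \cap \mul \G$ and $z = w + (z-w) \in \mul \G + \N \subseteq \N$ by the hypothesis $\mul \G \subseteq \N$, so $x \in \G^{-1}(\N)$; conversely, picking $n \in \N \cap \G(x)$ and taking $z := n$, $w := 0$ gives $(x,0) \in R$. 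Third, $I_{\G^{-1}(\M)} \subseteq R$, hence $R$ is a multivalued projection by the first part of Proposition \ref{lemamul}: for $y \in \G^{-1}(\M)$ choose $w \in \M \cap \G(y)$ and take $x := y$, $z := w$ in the description of $R$. Finally, Proposition \ref{prop2.2} yields $R = I_{\G^{-1}(\M)} \ \hat{+} \ (\G^{-1}(\N) \times \{0\}) = P_{\G^{-1}(\M), \G^{-1}(\N)}$, as claimed.

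The main obstacle is the kernel computation, where the hypothesis $\mul \G \subseteq \N$ enters essentially: without it, a witness $w \in \M \cap \mul \G$ lying outside $\N$ would produce elements of $\ker R$ beyond $\G^{-1}(\N)$, breaking the identification. By contrast, $\ran \G = \M + \N$ is used only to guarantee $\M \subseteq \ran \G$ in the range computation, so it is really the interplay between the two hypotheses that ensures $R$ is a multivalued projection with the prescribed range and kernel.
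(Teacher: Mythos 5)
Your proof is correct, and at the strategic level it coincides with the paper's: both arguments establish that $R:=\G^{-1}\PMN\G$ is a multivalued projection with $\ran R=\G^{-1}(\M)$ and $\ker R=\G^{-1}(\N)$, and then conclude via the fact that a multivalued projection is determined by its range and kernel. The tactics differ in two places, though. To see that $R\in\Sp(\HH)$, the paper computes $R^2=\G^{-1}\PMN(I_{\ran \G}\ \hat{+}\ (\{0\}\times \mul\G))\PMN\G=R$ using \eqref{TT1}, together with $\ran R=\G^{-1}(\M)\subseteq\dom\G=\dom R$; you instead verify the inclusion $I_{\G^{-1}(\M)}\subseteq R$ and invoke Proposition \ref{lemamul}, which is slightly more economical since the range computation is needed anyway. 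For the kernel, the paper passes to $I-R=P_{\ker R,\ran R}$ and reads off $\ker R=\ran(I-R)=\G^{-1}(\N)$, whereas you compute $\ker R$ directly from the elementwise description of the composition; your route has the advantage of making visible that $\mul\G\subseteq\N$ is precisely what prevents $\ker R$ from exceeding $\G^{-1}(\N)$, a point that is buried in the paper's algebraic idempotency computation. Your closing remark about which hypothesis enters where is accurate for your argument.
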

\begin{proof}
	If $E:=\G^{-1}\PMN\G$ then, by (\ref{TT1}), we get that
	$$E^2=\G^{-1}\PMN(I_{\ran \G}\hat{+}(\{0\}\times \mul\G))\PMN\G=E,$$
	$\dom E=\dom \G$ and $\ran E=\G^{-1}(\M)\subseteq \dom \G$. Then, $E\in\Sp(\HH).$ Finally,
	$I-E=P_{\ker E, \ran E}$ implies that $\ker E=\ran (I-E)=\G^{-1}(\N).$
\end{proof}

The multivalued projection $\PMN$ is quasi-affine to a multivalued projection with domain $\HH$, having  a positive semidefinite contraction as an operator part, in the sense that there exists a positive bounded operator $\G$ with $\ran \G= \M+\N$ such that $\G$ intertwines  $\PMN$ and this multivalued projection.

\begin{cor} Given  $\M$ and $\N$ operator ranges. Then 
$$\PMN X=X ( C  \ \hat{\dotplus} \ (\{0\} \times \St)),$$
where $X,C\in L(\overline{\M+\N})$ are positive, $X$ is a quasi-affinity, $C$ is a contraction and $\St$ is an operator range. 
\end{cor}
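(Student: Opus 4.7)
The plan is to derive the identity from the operator-part formula in Proposition \ref{Ando3}, by going through the Douglas factorizations of $A$ and $B$. Since $\M$ and $\N$ are operator ranges, choose $A,B\in L(\HH)$ with $\M=\ran A$, $\N=\ran B$, let $\G:=(AA^*+BB^*)^{1/2}$, and write $A=\G C_A$, $B=\G C_B$ as in \eqref{Douglas1}; recall $\ran C_A,\ran C_B\subseteq \clran \G=\overline{\M+\N}$. I then set
$$X:=\G|_{\overline{\M+\N}}\in L(\overline{\M+\N}).$$
Because $\ker\G=(\overline{\M+\N})^{\perp}$, the operator $X$ is positive, injective, and has dense range $\ran X=\ran \G=\M+\N$; that is, $X$ is a positive quasi-affinity on $\overline{\M+\N}$.

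For the other two pieces I set
$$C:=C_A^{\phantom{*}}C_A^*|_{\overline{\M+\N}}\in L(\overline{\M+\N})\quad\text{and}\quad \St:=\ran C_A\cap \ran C_B.$$
Since $\ran C_A\subseteq \overline{\M+\N}$, the operator $C$ is well defined, positive, and satisfies $\|C\|\leq \|C_A\|^{2}\leq 1$, so $C$ is a positive contraction. The subspace $\St$ is the intersection of two operator ranges in $\overline{\M+\N}$, hence itself an operator range.

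The core identity to check is
$$\PMN X=X\bigl(C \ \hat{\dotplus}\ (\{0\}\times\St)\bigr).$$
For the left-hand side, equation \eqref{eqlema43} in the proof of Proposition \ref{Ando3} gives $\PMN \G=AC_A^*\ \hat{\dotplus}\ (\{0\}\times(\M\cap\N))$; restricting the domain to $\overline{\M+\N}$ (which is exactly $\dom \PMN X$) yields
$$\PMN X=AC_A^*|_{\overline{\M+\N}}\ \hat{\dotplus}\ (\{0\}\times(\M\cap\N)).$$
For the right-hand side, the identity $\G C_A=A$ gives $XC=AC_A^*|_{\overline{\M+\N}}$. For the multivalued part, injectivity of $X$ on $\overline{\M+\N}$ forces $X(\mc U\cap \mc V)=X(\mc U)\cap X(\mc V)$ for subspaces $\mc U,\mc V\subseteq \overline{\M+\N}$; combined with $X(\ran C_A)=\ran A=\M$ and $X(\ran C_B)=\ran B=\N$ this yields $X(\St)=\M\cap\N$, so $X(\{0\}\times \St)=\{0\}\times (\M\cap \N)$. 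Lemma \ref{sumasubs} then distributes $X$ over the componentwise sum (the hypothesis $\ran C\subseteq \overline{\M+\N}=\dom X$ holds), and the two descriptions match, so the identity follows.

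The main technical point is the bookkeeping between $\HH$ and $\overline{\M+\N}$: one has to verify $X^{-1}(\M)=\ran C_A$ (and similarly for $\N$) to see why $\St=\ran C_A\cap \ran C_B$ is the correct choice for the ``multivalued shift''. This is immediate from $\G C_A=A$ together with the orthogonality $\ker \G\perp \overline{\M+\N}$, since any $x\in \overline{\M+\N}$ with $\G x=\G C_Ah$ satisfies $x-C_Ah\in \ker \G\cap \overline{\M+\N}=\{0\}$.
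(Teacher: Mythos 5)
Your proof is correct and follows essentially the same route as the paper: both arguments rest on equation \eqref{eqlema43} from Proposition \ref{Ando3} and the Douglas factorization data $C_A$, $C_B$, $\G$, and both make the identical choices $X=\G|_{\overline{\M+\N}}$, $C=C_A^{\phantom{*}}C_A^*|_{\overline{\M+\N}}$, $\St=\ran C_A\cap\ran C_B$. The only (cosmetic) difference is that the paper left-composes with $\G^{-1}$ and simplifies $\G^{-1}(\PMN\G)$, whereas you verify the two sides of the intertwining identity directly; your version spells out more carefully why $X(\St)=\M\cap\N$ and why Lemma \ref{sumasubs} lets $X$ distribute over the componentwise sum.
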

\begin{proof}
From \eqref{eqlema43}, $\G^{-1} (\PMN \G)=(I \hat{+} (\{0\} \times \ker \G))C_A^{\phantom{*}}C_A^*  \hat{+} \ (\{0\} \times \G^{-1}(\M \cap \N))=C_A^{\phantom{*}}C_A^* \ \hat{+} \ (\ker C_A^* \times \ker \G) \ \hat{+} \ (\{0\} \times \G^{-1}(\M \cap \N))=C_A^{\phantom{*}}C_A^* \ \hat{\dotplus} \ (\{0\} \times \G^{-1}(\M \cap \N)).$ Take $X:=\G|_{\overline{\M+\N}},$ $C:=C_AC_A^*|_{\overline{\M+\N}}$ and $\St:=\ran C_A\cap \ran C_B.$
\end{proof}

\begin{lema} \label{MNclosed0} Given  $A, B \in L(\HH),$ consider $\M=\ran A$ and $\N=\ran B.$ Then $\G^{-1}(\M \cap \N)$ is closed if and only if $$\M+\N=\G(\ker C_B^*) \dotplus \G(\ker C_A^*) \dotplus \M \cap \N,$$ where $C_A, C_B$ and $ \G$ are as in \eqref{Douglas1} and \eqref{Gamma}. In this case,
\begin{equation} \label{eqpropcerrado}
\M=\G(\ker C_B^*) \dotplus \M \cap \N \mbox { and } \N=\G(\ker C_A^*) \dotplus \M \cap \N.
\end{equation}
\end{lema}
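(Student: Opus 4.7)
The plan is to transfer the question to $\clran\G$, where the restriction $\G_0:=\G|_{\clran\G}\colon\clran\G\to\ran\G=\M+\N$ is a bijection. A first observation is that $\M\cap\N=\G(\ran C_A\cap\ran C_B)$: if $y=\G C_A h=\G C_B k\in\M\cap\N$, then $C_Ah-C_Bk\in\ker\G\cap\clran\G=\{0\}$, so $C_Ah=C_Bk\in\ran C_A\cap\ran C_B$; the reverse inclusion is immediate. Using the orthogonal decomposition $\HH=\clran\G\oplus\ker\G$ and $\G(\ker\G)=\{0\}$, this gives $\G^{-1}(\M\cap\N)=W\oplus\ker\G$, with $W:=\ran C_A\cap\ran C_B$, so $\G^{-1}(\M\cap\N)$ is closed if and only if $W$ is closed.

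Next I will exploit the key fact $\ran C_A+\ran C_B=\ran\begin{bmatrix}C_A & C_B\end{bmatrix}=\clran\G$, which is closed because $\begin{bmatrix}C_A & C_B\end{bmatrix}$ is a partial isometry. Setting $U:=\ker C_B^*\cap\clran\G$ and $V:=\ker C_A^*\cap\clran\G$, the identity $P_{\clran\G}=C_AC_A^*+C_BC_B^*$ yields $U=C_AC_A^*(U)\subseteq\ran C_A$ and, symmetrically, $V\subseteq\ran C_B$. Moreover $U$, $V$, $W$ are pairwise orthogonal in $\HH$: $W\subseteq\ran C_B\subseteq(\ker C_B^*)^{\perp}$ gives $W\perp U$, and the other pairs follow analogously. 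Hence $U\dotplus V\dotplus W$ is automatically an orthogonal direct sum, and the remaining task is to decide when this sum fills $\clran\G$.

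For the forward implication, assuming $W$ closed, Proposition \ref{ORLabrousse} applied to the closed sum $\ran C_A+\ran C_B$ gives $W=\ol{W}=\clran C_A\cap\clran C_B$, hence $\ran C_A\cap\clran C_B=W$. For any $x\in\ran C_A$, the orthogonal decomposition $x=P_{\ker C_B^*}x+P_{\clran C_B}x$ has first summand in $U$ (both $x$ and $P_{\clran C_B}x$ sit in $\clran\G$) and second summand in $\ran C_A\cap\clran C_B=W$ (it equals $x-P_{\ker C_B^*}x\in\ran C_A$ because $U\subseteq\ran C_A$). Thus $\ran C_A=U\dotplus W$, symmetrically $\ran C_B=V\dotplus W$, and summing yields $\clran\G=\ran C_A+\ran C_B=U\dotplus V\dotplus W$. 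For the converse, pairwise orthogonality forces any algebraic decomposition $\clran\G=U\dotplus V\dotplus W$ to be orthogonal, so $W=(U\oplus V)^{\perp_{\clran\G}}$ is closed. Applying the bijection $\G_0$ to the identities $\ran C_A=U\dotplus W$ and $\ran C_B=V\dotplus W$ produces the ``in this case'' formulae $\M=\G(\ker C_B^*)\dotplus(\M\cap\N)$ and $\N=\G(\ker C_A^*)\dotplus(\M\cap\N)$. The main technical step is the forward direction's identification $\ran C_A\cap\clran C_B=W$, which requires the closedness of $\ran C_A+\ran C_B$ in order to invoke Proposition \ref{ORLabrousse}.
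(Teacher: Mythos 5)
Your proof is correct and follows essentially the same route as the paper: both reduce the closedness of $\G^{-1}(\M\cap\N)$ to that of $\ran C_A\cap\ran C_B$ and then exploit, via Proposition \ref{ORLabrousse}, that $\ran C_A+\ran C_B=\clran\G$ is closed. The only organizational difference is that you first establish the refined identities $\ran C_A=(\ker C_B^*\cap\clran\G)\dotplus(\ran C_A\cap\ran C_B)$ and $\ran C_B=(\ker C_A^*\cap\clran\G)\dotplus(\ran C_A\cap\ran C_B)$ using \eqref{P_G}, so that the global decomposition and \eqref{eqpropcerrado} both follow by summing and applying the bijection $\G|_{\clran\G}$, whereas the paper obtains the decomposition of $\HH$ as $(\ker C_A^*+\ker C_B^*)\oplus(\ran C_A\cap\ran C_B)$ first and then extracts \eqref{eqpropcerrado} by a separate element-chasing argument.
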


\begin{proof} By \eqref{pisometry}, $\ker \G= \ker C_A^* \cap \ker C_B^*,$ and $\G^{-1}(\M \cap \N)=\ran C_A \cap \ran C_B \oplus \ker C_A^* \cap \ker C_B^*.$ Then 
\begin{equation}\label{GMN}
\G^{-1}(\M \cap \N) \textrm{ is closed if and only if} \ran C_A \cap \ran C_B \textrm{ is closed}.	\end{equation}
	
Suppose that $\G^{-1}(\M \cap \N)$ is closed. Then, from \eqref{GMN} and Proposition \ref{ORLabrousse} it follows that $\clran C_A \cap \clran C_B=\ol{\ran C_A \cap \ran C_B}=\ran C_A \cap \ran C_B.$ Also, by \eqref{pisometry} , $\ker C_A^* + \ker C_B^*$ is closed. Then
$$\HH=(\ker C_A^* + \ker C_B^*) \oplus \ran C_A \cap \ran C_B.$$ 
Applying $\G$ to both sides of the last equality it follows that
\begin{equation}\label{eqM+N}
\M+\N=\G(\ker C_B^*) \dotplus \G(\ker C_A^*) \dotplus \M \cap \N,
\end{equation} where the sums are direct. Indeed, if $\G x=\G y$ for some $x\in \ker C_A^* $ and 
$y\in \ker C_B^*$, then $x-y \in \ker \G=\ker C_A^* \cap \ker C_B^*$ so that $x, y \in \ker C_A^* \cap \ker C_B^*$ or  $\G x=\G y=0.$  On the other hand, if $\G x=\G z$ for some $x\in \ker C_A^*+ \ker C_B^*$ and $z\in \ran C_A\cap \ran C_B  \subseteq  \clran \G$ then $x-z \in \ker \G.$ Thus, $x=z+s$ for some $s\in \ker \G,$ and so $z=x-s\in \clran \G \cap \ker\G=\{0\}$ and $\G x=\G z=0.$

Conversely, if $\M+\N=\G(\ker C_B^*) \dotplus \G(\ker C_A^*) \dotplus \M \cap \N$, applying $\G^{-1}$ to both sides of this equation, we get that
$\HH=(\ker C_B^* + \ker C_A^*) \oplus \ran C_A \cap \ran C_B.$ Thus $\ran C_A \cap \ran C_B$ is closed, or equivalently $\G^{-1}(\M\cap \N)$ is closed.

In this case \eqref{eqM+N} implies that 
$$\M=\G(\ker C_B^*) \dotplus \M \cap \N.$$

Indeed, the inclusion $\supseteq$ always holds, since $\G(\ker C_B^*)=AC_A^*(\ker C_B^*)\subseteq \M.$ To see the reverse inclusion take $x \in \M$ and write $x=x_1+x_2+x_3$ with $x_1 \in \G(\ker C_B^*),$ $x_2 \in \G(\ker C_A^*)$ and $x_3 \in \M \cap\N.$ On the one hand, $x_2 \in\N$ and, on the other hand, $x_2=x-x_1-x_3 \in \M.$ 
Then $x_2\in \M \cap \N \cap \G(\ker C_A^*)=\{0\}$ so that $x=x_1+x_3 \in \G(\ker C_B^*) \dotplus \M \cap \N.$
In a similar fashion, $\N=\G(\ker C_A^*) \dotplus \M \cap \N.$
\end{proof}

\begin{cor}\label{MNclosed}
Given  $A, B \in L(\HH),$ consider $\M=\ran A$ and $\N=\ran B$ such that $\M \cap \N$ is closed in $\M+\N.$ Then 
$$\PMN=P_{\G(\ker C_B^*)\pl \N}\hat{\dotplus}\ (\{0\} \times (\M \cap \N)).$$ 	
\end{cor}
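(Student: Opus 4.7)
The plan is to apply Lemma~\ref{MNclosed0} to obtain a concrete direct-sum decomposition of $\M$, exhibit $P_{\G(\ker C_B^*)\pl\N}$ as an operator part of $\PMN$, and then invoke the general identity that $\PMN$ equals any of its operator parts in componentwise sum with $\{0\}\times\mul\PMN$. Note that the operator part produced here need not coincide with the one $\G C_A^{\phantom{*}} C_A^*\G^{-1}$ from Proposition~\ref{Ando3}; only the resulting decomposition matters.

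The first task is to verify that the hypothesis ``$\M\cap\N$ closed in $\M+\N$'' is exactly what is needed to invoke Lemma~\ref{MNclosed0}, namely that $\G^{-1}(\M\cap\N)$ be closed in $\HH$. Endowing the operator range $\M+\N=\ran\G$ with its natural Hilbert space structure $\M(\G)$, the map $\G\colon\HH\to\M(\G)$ is a surjective contraction, so preimages of $\M(\G)$-closed subsets are closed in $\HH$. With this, Lemma~\ref{MNclosed0} furnishes the decomposition $\M=\G(\ker C_B^*)\dotplus(\M\cap\N)$.

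From this decomposition I would deduce that $\M+\N=\G(\ker C_B^*)+\N$ (using $\M\cap\N\subseteq\N$) and that $\G(\ker C_B^*)\cap\N=\{0\}$: any element of this intersection lies in $\G(\ker C_B^*)\cap\M\cap\N$, which vanishes by the direct sum. Hence $P_{\G(\ker C_B^*)\pl\N}$ is a bona fide projection with domain $\M+\N=\dom\PMN$. Moreover, it is contained in $\PMN$: for $x=m'+n$ with $m'\in\G(\ker C_B^*)\subseteq\M$ and $n\in\N$, the pair $(x,m')$ lies in $\PMN=I_\M\ \hat{+}\ (\N\times\{0\})$ by Proposition~\ref{prop2.2}. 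Thus $P_{\G(\ker C_B^*)\pl\N}$ is an operator part of $\PMN$, and since $\mul\PMN=\M\cap\N$, the stated componentwise decomposition follows; the sum is direct because the first summand is an operator and therefore has trivial multivalued part.

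The main obstacle is the topological translation in the first step, identifying closedness of $\M\cap\N$ inside $\M+\N$ with closedness of $\G^{-1}(\M\cap\N)$ in $\HH$; once this is settled, Lemma~\ref{MNclosed0} does the heavy lifting and the rest reduces to routine subspace bookkeeping.
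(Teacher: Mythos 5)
Your proof is correct and follows essentially the same route as the paper: both reduce the hypothesis to closedness of $\G^{-1}(\M\cap\N)$, invoke Lemma~\ref{MNclosed0} to see that $P_{\G(\ker C_B^*)\pl\N}$ is a well-defined projection with domain $\M+\N$ contained in $\PMN$, and conclude via the operator-part decomposition (equivalently, Lemma~\ref{lemalr}). The only cosmetic difference is your justification of the first step through the $\mc M(\G)$ topology, where the paper simply notes $\G^{-1}(\M\cap\N)=\G^{-1}(\ol{\M\cap\N})$ and uses continuity of $\G$ on $\HH$.
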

\begin{proof} If $\M \cap \N$ is closed in $\M+\N$ then $\G^{-1}(\M\cap\N)=\G^{-1}(\ol{\M\cap \N})$ is closed. By Lemma \ref{MNclosed0} and (\ref{eqM+N}), $P_{\G(\ker C_B^*)\pl \N}$ is well defined. Set $E:=P_{\G(\ker C_B^*)\pl \N}\hat{\dotplus}\ (\{0\} \times (\M \cap \N)),$ then $\mul E =\M \cap \N=\mul \PMN$ and, by Lemma \ref{MNclosed0}, $\dom E=\M+\N=\dom \PMN.$ Clearly, $E \subseteq \PMN$ because $\G(\ker C_B^*) \subseteq \M,$ and the result follows. 
\end{proof}

\begin{thm}\label{propP0}
	Let $E\in \Sp(\HH)$ with $\ran E=\ran A$ and $\ker E=\ran B$ for some $A,B\in L(\HH).$ Suppose that $\G^{-1}(\mul E)$ is closed where $\G$ is as in (\ref{Gamma}). Then $\G^{-1}E\G \in \MP(\HH)$ with $(\G^{-1}E\G)_0 \in \mc P.$
	
	Conversely, if $E\in \lr(\HH)$ and there exists $\G\in L(\HH)$ positive (semi-definite) with $\ran \G=\dom E$ such that $\G^{-1}E\G \in \MP(\HH)$ with $(\G^{-1}E\G)_0 \in \mc P$ then $E$ is a semiclosed multivalued projection.
\end{thm}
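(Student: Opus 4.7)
For the forward direction, I invoke Proposition \ref{propSC1} to write $\ran E = \ran A$ and $\ker E = \ran B$ for some $A,B \in L(\HH)$; set $\M := \ran A$ and $\N := \ran B$ so that $E = \PMN$. Taking $\G$ as in \eqref{Gamma}, the Douglas factorization gives $A = \G C_A$, $B = \G C_B$ with the identity $C_A C_A^* + C_B C_B^* = P_{\clran \G}$ from \eqref{P_G}. Lemma \ref{D} (applied using $\mul \G = \{0\} \subseteq \N$) immediately yields $F := \G^{-1}E\G = P_{\G^{-1}(\M),\, \G^{-1}(\N)}$.

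To conclude $F \in \MP(\HH)$, I translate the closedness of $\G^{-1}(\mul E) = \G^{-1}(\M \cap \N)$ into the closedness of $\ran C_A \cap \ran C_B$ via \eqref{GMN} together with the identity $\G^{-1}(\M \cap \N) = (\ran C_A \cap \ran C_B) \oplus \ker \G$. This unlocks Lemma \ref{MNclosed0}, from which I compute
\[
\G^{-1}(\M) = \ker C_B^* \oplus (\ran C_A \cap \ran C_B), \qquad \G^{-1}(\N) = \ker C_A^* \oplus (\ran C_A \cap \ran C_B),
\]
both orthogonal sums (since $\ker C_B^* \perp \ran C_B \supseteq \ran C_A \cap \ran C_B$, and symmetrically for $\ker C_A^*$), hence closed. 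Combined with \eqref{L2}, this gives $F \in \MP(\HH)$.

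For $F_0 \in \mc P$, Corollary \ref{PyI-P} applied to the closed $F$ reduces the task to showing that $\G^{-1}(\M) \ominus \mul F$ is orthogonal to $\ker F = \G^{-1}(\N)$. A direct computation gives $\G^{-1}(\M) \ominus \mul F = \ker C_B^* \cap \clran \G$, and orthogonality with $\ran C_A \cap \ran C_B$ follows from $\ker C_B^* \perp \ran C_B$. The delicate step --- and the main obstacle --- is orthogonality with $\ker C_A^*$: here I plan to use $C_A C_A^* + C_B C_B^* = P_{\clran \G}$ to observe that any $v \in \ker C_B^* \cap \clran \G$ satisfies $C_A C_A^* v = v$, whereas the $\clran \G$-component of any vector in $\ker C_A^*$ lies in $\ker C_A C_A^*$; selfadjointness of $C_A C_A^*$ then forces orthogonality of its $1$- and $0$-eigenspaces.

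For the converse, I set $\M := \G(\ran F)$ and $\N := \G(\ker F)$; these are operator ranges as images of closed subspaces under the bounded $\G$. Using $\ker \G \subseteq \ker F$ (from $\G(\ker \G) = \{0\} \subseteq \ker E$), I verify $\G(\mul F) = \M \cap \N$. Applying the distributivity from Lemma \ref{sumasubs} to the decomposition $F = I_{\ran F} \hat{+} (\ker F \times \{0\})$ of Proposition \ref{prop2.2} and conjugating by $\G$ and $\G^{-1}$ gives $\G F \G^{-1} = I_\M \hat{+} (\N \times \{0\}) = \PMN$. Combined with $\dom E = \ran \G = \M + \N$, this identifies $E$ with $\PMN$, which is a semiclosed multivalued projection by Proposition \ref{propSC1}.
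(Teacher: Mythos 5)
Your forward direction is correct and follows essentially the paper's own route: Lemma \ref{D} gives $F:=\G^{-1}E\G=P_{\G^{-1}(\M),\G^{-1}(\N)}$, the equivalence \eqref{GMN} converts the hypothesis into closedness of $\ran C_A\cap\ran C_B$, and Lemma \ref{MNclosed0} yields the closed orthogonal decompositions of $\G^{-1}(\M)$ and $\G^{-1}(\N)$, whence $F\in\MP(\HH)$ by \eqref{L2}. Your identification of $\ran F_0=\ker C_B^*\cap\clran\G$ via Corollary \ref{PyI-P} and the orthogonality argument based on $C_A^{\phantom{*}}C_A^*+C_B^{\phantom{*}}C_B^*=P_{\clran\G}$ is a clean variant of the paper's verification that $F=P_{\ker C_B^*\ominus(\ker C_A^*\cap\ker C_B^*)}\ \hat{\oplus}\ (\{0\}\times\G^{-1}(\M\cap\N))$; it rests on the same identities, so this half is fine.

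The converse has a genuine gap at the last step. Your conjugation computes $\G F\G^{-1}=(\G\G^{-1})E(\G\G^{-1})=I_{\ran\G}\,E\,I_{\ran\G}=E\cap(\HH\times\ran\G)$ (using $\dom E=\ran\G$). This relation is contained in $E$ and has the same domain as $E$, but by Lemma \ref{lemalr} equality with $E$ additionally requires $\mul E\subseteq\mul\PMN=\M\cap\N\subseteq\ran\G$, i.e.\ in effect $\ran E\subseteq\ran\G$. Nothing in your argument supplies this containment, and it is not automatic from the stated hypotheses: for $\G$ positive with dense non-closed range, the relation $E=\ran\G\times\HH$ has $\dom E=\ran\G$ and $\G^{-1}E\G=\HH\times\HH=P_{\HH,\HH}\in\MP(\HH)$ with $(\HH\times\HH)_0=\HH\times\{0\}=P_{\{0\}}\in\mc P$, yet $\ran E=\HH\not\subseteq\ran\G$, so $E$ is not even a multivalued projection. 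The paper's proof inserts exactly the assertion $\ran E\subseteq\ran\G$ before writing $E=I_{\ran\G}EI_{\ran\G}$ (claiming it follows from $\dom\G^{-1}E\G=\HH$); whichever way one closes this point, it is the crux of the converse, and your write-up does not engage with it at all. As written, you have only shown that $E\cap(\HH\times\ran\G)$ is a semiclosed multivalued projection. A minor further remark: your parenthetical justification of $\ker\G\subseteq\ker F$ (``$\G(\ker\G)=\{0\}\subseteq\ker E$'') is garbled --- what you need is that $x\in\ker\G$ gives $(x,0)\in\G$, $(0,0)\in E$ and $(0,0)\in\G^{-1}$, hence $(x,0)\in F$ --- though the conclusion itself is true.
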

\begin{proof} Write $\M=\ran A$ and $\N=\ran B,$ and suppose that $\G^{-1}(\M\cap\N)$ is closed, or equivalently, by (\ref{GMN}), $\ran C_A\cap\ran C_B$ is closed. It follows from \eqref{eqpropcerrado} that \begin{equation}\label{B}
\G^{-1}(\M)=\ker C_B^*\oplus \ran C_A\cap\ran C_B,
\end{equation} 
so that $\G^{-1}(\M)$ is closed. On the other hand, $\M=\ran \G C_A$ and then
\begin{equation}\label{C}
	\G^{-1}(\M)=\ran C_A\oplus \ker C_A^*\cap\ker C_B^*,
	\end{equation}
so $\ran C_A$ is closed. The analogous to (\ref{B}) and (\ref{C}) hold for $\G^{-1}(\N)$ so that $\ran C_B$ is closed and 
\begin{equation} \label{V}
\G^{-1}(\N)=\ran C_B \oplus \ker C_A^* \cap \ker C_B^*.
\end{equation}

Then, by Lemma \ref{D}, 
$$\G^{-1}\PMN \G = P_{\G^{-1}(\M), \G^{-1}(\N)}=P_{\ker C_B^*}\hat{\dotplus}(\{0\}\times \G^{-1}(\M \cap \N)).$$
To see the last equality, notice that the last two relations have the same domain, $\HH,$ and the same multivalued part, $\G^{-1}(\M\cap\N);$ since $\ker C_B^* \subseteq\G^{-1}(\M)$ and $\ran C_B \subseteq\G^{-1}(\N)$ then $P_{\ker C_B^*} \subseteq  P_{\G^{-1}(\M), \G^{-1}(\N)}.$

Finally, write $\St:=\G^{-1}(\M\cap\N)$ and $P_0=P_{\ker C_B^*\ominus (\ker C_A^*\cap\ker C_B^*) }.$ We claim that $$P_{\ker C_B^*}\hat{\dotplus}( \{0\}\times \St)=P_0 \ \hat{\oplus} \ (\{0\}\times \St).$$ Again, both relations have the same domain and multivalued part. Also, $I_{\ran P_0}\subseteq I_{\ker C_B^*}\subseteq P_{\ker C_B^*}$ and $\ker P_0\times \{0\}=(\ran C_B\times \{0\})\hat{\dotplus}((\ker C_A^*\cap\ker C_B^*)\times \{0\})\subseteq P_{\ker C_B^*}\hat{\dotplus}( \{0\}\times \St).$ Then the inclusion $\supseteq$ holds, and the identity follows.

Conversely, assume that there exist $\G\in L(\HH)$ positive (semi-definite) with $\ran \G=\dom E$ such that $\G^{-1}E\G \in \MP(\HH)$ with $P_0:=(\G^{-1}E\G)_0 \in \mc P.$  By hypothesis, $\St:=\mul \G^{-1}E\G \subseteq \ker P_0.$ Then $\G^{-1}E\G=I_{\ran P_0} \ \hat{+} \ (\ker P_0\times \St)=P_{\ran P_0\oplus\St, \ker P_0}.$ From $\dom \G^{-1}E\G=\HH,$ it follows that $\ran E\subseteq \ran \G=\dom E.$ Also, $\ker \G \subseteq \St.$ In fact, $\ker \G=\mul \G^{-1}\subseteq \mul \G^{-1}E\G=\mul (P_0 \ \hat{+} \ (\{0\} \times \St))=\St.$ Then
\begin{eqnarray}\label{EE}
E&=&I_{\ran \G} EI_{\ran \G}=\G \G^{-1} E \G\G^{-1}=\G P_{\ran P_0\oplus\St, \ker P_0} \G^{-1}\nonumber\\
&=&P_{\G(\ran P_0\oplus\St), \G(\ker P_0)},
\end{eqnarray}
where we apply Lemma \ref{D} to $\tilde{\G}:=\G^{-1},$ as
$\ran \G^{-1}=\HH=\dom P_{\ran P_0\oplus\St, \ker P_0}$ and $\mul \G^{-1}=\ker \G\subseteq \ker P_0.$ Therefore, $E\in \Sp(\HH).$ Furthermore, $E$ is an operator range because $\ran E$ and $\ker E$ are operator ranges.
 \end{proof}

\begin{cor} Let  $\M$ and $\N$ be operator ranges such that $\M \cap \N$ is closed in $\M+\N.$  Then
$$\PMN X= X(P_0 \ \hat{\oplus} \ (\{0\} \times \St)),$$ 
where $X,P_0\in L(\overline{\M+\N})$ are positive, $X$ is a quasi-affinity, $P_0$ is a projection and $\St$ is a closed subspace.
\end{cor}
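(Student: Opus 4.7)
The plan is to specialize Theorem \ref{propP0} to the semiclosed situation and then restrict everything from $\HH$ to $\overline{\M+\N}$, paralleling the bookkeeping done for the analogous corollary that follows Theorem \ref{propP0}. Write $\M=\ran A$ and $\N=\ran B$ with $A,B\in L(\HH)$, and set $\G:=(AA^*+BB^*)^{1/2}$, with associated contractions $C_A,C_B$ and polar data as in \eqref{Douglas1}--\eqref{pisometry}. Since $\M\cap\N$ is closed in $\M+\N$, equivalence \eqref{GMN} gives that $\G^{-1}(\M\cap\N)$ is closed, so Theorem \ref{propP0} applies and, from its proof,
\begin{equation*}
\G^{-1}\PMN\G \;=\; P_0' \;\hat{\oplus}\; \bigl(\{0\}\times \G^{-1}(\M\cap\N)\bigr),
\end{equation*}
where $P_0':=P_{\ker C_B^{*}\ominus\ker\G}\in\mc P$.

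Next I would left-multiply this identity by $\G$. Since $\ran(\PMN\G)\subseteq \M\subseteq \M+\N=\ran\G$, identity \eqref{TT1} gives $\G\G^{-1}\PMN\G=\PMN\G$; and because $\ran P_0'\subseteq\HH=\dom\G$, Lemma \ref{sumasubs} lets us distribute $\G$ across the componentwise sum. Using $\G(\G^{-1}(\M\cap\N))=\M\cap\N$, one obtains
\begin{equation*}
\PMN\G \;=\; \G P_0' \;\hat{+}\; \bigl(\{0\}\times (\M\cap\N)\bigr).
\end{equation*}

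To finish, restrict the inputs to $\overline{\M+\N}=\clran\G$. Take $X:=\G|_{\overline{\M+\N}}\in L(\overline{\M+\N})$; this is positive, has $\ker X=\{0\}$ (as $\ker\G=(\overline{\M+\N})^{\perp}$) and $\ran X=\M+\N$ dense, so $X$ is a quasi-affinity. Because $\ran P_0'\subseteq \ker C_B^{*}\cap\clran\G\subseteq\overline{\M+\N}$ and $\ker P_0'\supseteq\ker\G$, the restriction $P_0:=P_0'|_{\overline{\M+\N}}$ is an orthogonal projection in $L(\overline{\M+\N})$. Set $\St:=\G^{-1}(\M\cap\N)\cap\overline{\M+\N}=\ran C_A\cap\ran C_B$, which is closed by \eqref{GMN}. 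Since $X(\St)=\M\cap\N$ and $X$ is injective on $\overline{\M+\N}$, the relation $X\bigl(P_0\,\hat{\oplus}\,(\{0\}\times\St)\bigr)$ has the same domain, range and multivalued part as the right-hand side of the displayed identity above restricted to $\overline{\M+\N}$, yielding
\begin{equation*}
\PMN X \;=\; X\bigl(P_0\,\hat{\oplus}\,(\{0\}\times \St)\bigr),
\end{equation*}
as required.

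The substantive content has already been packaged inside Theorem \ref{propP0}; the main obstacle is purely bookkeeping, namely checking that restriction from $\HH$ to $\overline{\M+\N}$ preserves both the factorization and the directness of the componentwise sum. This is handled by the inclusions $\ran P_0'\subseteq\overline{\M+\N}$, $\ker\G\subseteq\ker P_0'$, and the identification $\St=\ran C_A\cap\ran C_B$ made possible by the closedness hypothesis via \eqref{GMN}.
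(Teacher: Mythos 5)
Your proposal is correct and follows essentially the route the paper intends: the corollary is stated as an immediate consequence of Theorem \ref{propP0}, obtained by left-multiplying the identity $\G^{-1}\PMN\G=P_{\ker C_B^*\ominus\ker\G}\ \hat{\oplus}\ (\{0\}\times\G^{-1}(\M\cap\N))$ by $\G$ and restricting to $\overline{\M+\N}$, with exactly the choices $X:=\G|_{\overline{\M+\N}}$ and $\St:=\ran C_A\cap\ran C_B$ that the paper uses in the analogous corollary following Proposition \ref{Ando3}. All the bookkeeping you carry out (injectivity of $X$ on $\clran\G$, invariance of $\clran\G$ under $P_0'$, the identification $\G^{-1}(\M\cap\N)\cap\clran\G=\ran C_A\cap\ran C_B$, and the final comparison of domains and multivalued parts via Lemma \ref{lemalr}) checks out.
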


\subsection*{Acknowledgments}
M.~Contino, A.~Maestripieri  and S.~Marcantognini were
supported by CONICET PIP 11220200102127CO.  
M.~Contino was supported by María Zambrano Postdoctoral Grant CT33/21 at Universidad Complutense de Madrid financed by the Ministry of Universities with Next Generation EU funds and
by Grant CEX2019-000904-S funded by MCIN/AEI/10.13039/501100011033. 



\end{document}